\setlist[enumerate]{leftmargin=.5in}
\setlist[itemize]{leftmargin=.5in}
\DeclareMathOperator*{\essinf}{ess\,inf}
\newcommand{\ord}[1]{\mathcal{O}\left(#1\right)}
\newtheorem{remark}{Remark}
\newtheorem{definition}{Definition}
\newtheorem{lemma}{Lemma}
\newtheorem{theorem}{Theorem}
\newtheorem{corollary}{Corollary}
\newtheorem{proposition}{Proposition}
\title{Bifurcations in Interior Transmission Eigenvalues:\\Theory and Computation}
\author{
Davide Pradovera\\
\small Stockholm University, Department of Mathematics,\\
\small Roslagsv\"agen 26, 11419 Stockholm, Sweden\\
\small \texttt{davide.pradovera@math.su.se}
\and
Alessandro Borghi\\
\small Technical University Berlin, Institute of Mathematics,\\
\small Stra\ss{}e des 17.~Juni 136, 10623 Berlin, Germany\\
\small \texttt{borghi@tu-berlin.de}
\and
Lukas Pieronek\\
\small \phantom{foobar}Independent researcher\phantom{foobar}\\
\small \texttt{pieronek.lukas@gmail.com}
\and
Andreas Kleefeld\\
\small Forschungszentrum J\"ulich GmbH,\\
\small J\"ulich Supercomputing Centre, Wilhelm-Johnen-Str.,\\
\small 52425 J\"ulich, Germany\\
\small University of Applied Sciences Aachen,\\
\small Faculty of Medical Engineering and Technomathematics,\\
\small Heinrich-Mu\ss{}mann-Str.~1, 52428 J\"ulich, Germany\\
\small \texttt{a.kleefeld@fz-juelich.de}
}
\date{}
\begin{document}

\maketitle

\begin{abstract}
The interior transmission eigenvalue problem (ITP) plays a central role in inverse scattering theory and in the spectral analysis of inhomogeneous media. Despite its smooth dependence on the refractive index at the PDE level, the corresponding spectral map from material parameters to eigenpairs may exhibit non-smooth or bifurcating behavior. In this work, we develop a theoretical framework identifying sufficient conditions for such non-smooth spectral behavior in the ITP on general domains. We further specialize our analysis to some radially symmetric geometries, enabling a more precise characterization of bifurcations in the spectrum. Computationally, we formulate the ITP as a parametric, discrete, nonlinear eigenproblem and use a match-based adaptive contour eigensolver to accurately and efficiently track eigenvalue trajectories under parameter variation. Numerical experiments confirm the theoretical predictions and reveal novel non-smooth spectral effects.
\end{abstract}

\bigskip
\noindent\textbf{Keywords:}
Interior transmission eigenvalues; nonlinear eigenvalue problems; spectral bifurcation; parameter-dependent PDEs; contour-integral methods; numerical eigensolvers.

\bigskip
\noindent\textbf{MSC2020:}
35P30; 65H17; 47J10; 78A46; 37G10.

\section{Introduction}
The \emph{interior transmission eigenvalue problem} (ITP) arises in inverse acoustic scattering theory and mathematical physics. Its importance lies in its role in understanding how time-harmonic acoustic waves interact with inhomogeneous media, particularly for reconstructing properties of objects from scattered wave data in inverse problems \cite{cakoni2008transmission,Cakoni2006}. The ITP first appeared in 1986, when Kirsch studied the denseness property of the far-field operator \cite{kirsch1986denseness}. This was later followed by the work of Colton and Monk \cite{Colto1988a}, who studied it in connection with the linear sampling method. These seminal works established the ITP as a central concept in inverse scattering problems, and interior transmission eigenvalues (the ``solutions'' of the ITP) have since played a crucial role in algorithms for such problems. In particular, ITP eigenvalues encode valuable information about the refractive index of a medium and its geometry.

The ITP is computationally challenging due to the mixed boundary conditions and the coupling between different wave fields. This is easily seen in the strong form of the ITP:
\begin{equation}\label{ITPsetup}
    \begin{cases}
		\Delta v + \kappa^2 v=0=\Delta w + \kappa^2n w \quad &\text{in }D,\\
		v-w=0=\partial_{\nu}v-\partial_{\nu}w \quad &\text{on }\partial D.
	\end{cases}
\end{equation}
Here and throughout, the scatterer $D$ is a given bounded domain whose boundary $\partial D$ has outward-pointing normal $\nu$, and $0<n\not\equiv1$ is the refractive index. We call the nonzero wavenumber $\kappa\in\mathbb{C}_{\neq0}:=\mathbb{C}\setminus\{0\}$ an \emph{interior transmission eigenvalue} (ITE) if there exist nontrivial eigenfunctions $v$ and $w$ solving \cref{ITPsetup}. See \cref{sec:ITP} for more details on the problem and on the function spaces where eigenfunctions are sought.

The discreteness \cite{Colto2007,Colto1989c,Hickm2012,kirsch1986denseness,Rynne1991} and existence \cite{Cakon2010,Paiva2008} of real ITEs have been studied in great detail. However, the problem is not self-adjoint and therefore non-real eigenvalues may exist. For some special geometries such as spherically stratified media, proofs of the existence of non-real ITEs have been developed \cite{colton2010analytical,leung2012complex, xu2017estimates, colton2015distribution,leung2017existence}, although the general case remains open. On the computational side, several numerical methods have been developed to compute (real and non-real) ITEs, e.g., by finite-element methods \cite{Ji2013,Sun2016}, boundary-element methods \cite{Cosso2011,cosso2013,kleefeld2013numerical,zeng2016spectral,cakonikress}, the inside-outside-duality method \cite{peterskleefeld}, and the modified method of fundamental solutions \cite{kleefeldpieronek2018}. See \cite{bookcakoni2022} for a recent comprehensive overview.


In this work, we investigate the properties of the map $n\mapsto(\kappa,v,w)$ implicitly defined by \cref{ITPsetup}. Although the ITP depends smoothly on the refractive index $n$, the above-defined map \emph{may not be smooth}. For instance, it was shown in \cite{PieKle24} that certain domain geometries (disks in 2D and balls in 3D) may lead to singular, bifurcating behavior in the ITEs as $n$ varies. Understanding such parameter dependence is essential for assessing the stability and sensitivity of inverse scattering algorithms. As a notable example, many sampling methods rely on excluding real transmission eigenvalues from the probing wave numbers. The stability of such approaches may be strongly affected near bifurcation points, where real transmission eigenvalues suddenly become non-real or, conversely, suddenly reappear on the real axis. From a more general viewpoint, bifurcation analysis can help guiding numerical continuation or optimization methods that rely on smooth spectral behavior.

\subsection{Contribution of the paper}
The novelty of our work is threefold. On the one hand, we identify novel sufficient conditions for non-smooth spectral behavior in the ITP on general domains. Our main results on this are \cref{thm:bifurc,cor:bifurc_sufficient,thm:bifurc_general}. We then specialize our theory to obtain a more concrete characterization of spectral bifurcations in two radially symmetric cases: the ITP on the disk (\cref{prop:disk:bifurc}) and on the annulus (\cref{prop:annulus:bifurc,prop:annulus:bifurc_ord}). Lastly, we show that writing the ITP as a parametric, discrete eigenproblem affords us the convenience of using advanced state-of-the-art tools from computational linear algebra to accurately and efficiently compute the (parametric) spectrum of the ITP. Our numerical tests are performed both in the above-mentioned radially symmetric settings, where separation of variables reduces the ITP to a low-dimensional, nonlinear eigenproblem, and on more general inhomogeneous, non-radially symmetric media, where discretizing the ITP usually leads to a large-scale, linear eigenproblem.

Concerning the last point, our approach combines the Beyn contour-integral method \cite{beyn2012integral}, effective for nonlinear but nonparametric eigenproblems, with the match-based adaptive contour eigensolver (MACE) \cite{PraBor24}, which enables tracking the evolution of eigenvalues as the refractive index varies, even in the presence of non-smooth behavior. Through our numerical experiments, we provide evidence of what is, to our knowledge, novel non-smooth spectral behavior in ITE trajectories. Although interesting and novel in their own right, our numerical tests also serve to complement and validate the theoretical results developed in this work, while simultaneously demonstrating the effectiveness of our computational framework.


\section{The interior transmission eigenvalue problem}\label{sec:ITP}
In this section we introduce our target eigenvalue problems.

Consider a bounded Lipschitz domain $D\subset\mathbb{R}^d$, $d\in\{1,2,3\}$, and a real-valued refractive index
\begin{equation}\label{eq:indexset}
    n\in\mathcal N:=\left\{n\in L^\infty(D):\essinf n>0\right\}\setminus\{1\}.
\end{equation}
The interior transmission eigenvalues (ITEs) of $D$ with refractive index $n$ are defined as all those $\kappa\in\mathbb{C}_{\neq0}$ for which there exist nontrivial complex-valued eigenfunctions
\begin{multline*}
(v,w)\in\big\{(\phi,\psi)\in[L^2(D;\mathbb C)]^2:\phi-\psi\in H^2(D;\mathbb C),\\
\phi-\psi=0=\partial_\nu(\phi-\psi)\text{ on }\partial D\big\}\setminus\{(0,0)\}
\end{multline*}
(using standard notation for Sobolev spaces), such that \cref{ITPsetup} holds \cite{bookcakoni2022}.
Note that we must exclude the pathological cases $n\equiv1$ and $\kappa=0$. If $n\equiv1$, the problem becomes singular: any $\kappa\in\mathbb{C}$ is an eigenvalue, since one may easily find eigenfunctions $v\equiv w$ with $\Delta v + \kappa^2 v=0$, without imposing any boundary conditions. By a similar reasoning, $\kappa=0$ is always an eigenvalue for any $n$, since we may choose $v\equiv w$ to be any harmonic function on $D$, again without imposing any boundary conditions. This way, noting that the set of harmonic functions on a non-empty $D$ has infinite dimension, we conclude that $0$ belongs to the \emph{continuous spectrum} of the ITP \cite{Kubrusly2012}. This complexity and, simultaneously, triviality justify the exclusion of $\kappa=0$ from our investigations.


If additionally $v,w\in H^1(D;\mathbb C)$, which holds, for instance, by elliptic regularity theory for domains $D$ with $C^3$-continuous boundary \cite{lukas2020}, the eigenfunctions also solve the following weak formulation:
\begin{multline}\label{eq:ITPweak}
    \text{find }(\kappa,(v,w))\in\mathbb{C}_{\neq0}\times\mathcal V_{\neq0}\ :\\
    \int_D\left(\nabla v\cdot\nabla\overline{\phi}-\nabla w\cdot\nabla\overline{\psi}\right)=\kappa^2\int_D\left(v\overline{\phi}-nw\overline{\psi}\right)\ \forall(\phi,\psi)\in\mathcal V,
\end{multline}
where $\mathcal V=\left\{(\phi,\psi)\in[H^1(D;\mathbb C)]^2:\phi-\psi=0\text{ on }\partial D\right\}$ and $\mathcal V_{\neq0}=\mathcal V\setminus\{(0,0)\}$. This reformulation of the ITP is convenient in practice and will be the basis for our general study on complex-valued ITE trajectories, see \cref{sec:paraITP}. 

\begin{remark}
    One can show that, for Lipschitz domains $D$, the ITP is also equivalent to a linear eigenvalue problem of the form $\mathcal{L}u=\kappa^{-2}u$, where $u\in[H^2(D,\mathbb{C})]^2$ and $\mathcal{L}$ is an $n$-dependent, non-selfadjoint compact operator \cite{bookcakoni2022}. This alternative formulation is useful to deduce spectral properties of the problem: using standard arguments, one can conclude that the ITP spectrum is countable and has no finite accumulation points. This, in combination with the operator's non-selfadjointness, justifies looking for complex ITEs.
\end{remark}

For radially stratified media, which are of interest in \cref{sec:disk,sec:annulus}, existence of \emph{non-real} ITEs was proven under quite general assumptions in \cite{leung2017existence}. For general domains and refractive indices, only the existence of \emph{real} ITEs has been proven so far in \cite{cakoni2009existence, Cakon2010}.

As the simplest example, we recall the ITP on the unit disk with homogeneous refractive index, which allows for closed-form expressions of its eigenfunctions and was also the subject of in-depth analysis in \cite{PieKle24}.

\paragraph{Unit disk.} Let $D=B_1\subset\mathbb R^2$ (the unit disk) and substitute $n\equiv p\in\mathcal N$, cf.~\cref{eq:indexset}. Then ITP eigenfunctions can be expressed by separation of variables in polar coordinates, as
\begin{equation}\label{eq:funDisk}
    v(\rho,\theta)=\alpha J_m(\kappa\rho)\phi(m\theta)\;\text{ and }\;
    w(\rho,\theta)=\beta J_m(\sqrt{p}\kappa\rho)\phi(m\theta),\quad\rho\in[0,1],\theta\in[0,2\pi[,
\end{equation}
where $m\in\mathbb{N}_0:=\{0,1,2,\ldots\}$, $(\alpha,\beta)\neq(0,0)$, $\phi$ denotes either sine (if $m\neq0$) or cosine, and $J_m$ is the index-$m$ Bessel function of the first kind. Enforcing the boundary conditions requires
\begin{equation*}
    \begin{pmatrix}
    J_m(\kappa) & J_m(\sqrt{p}\kappa)\\
    J_m'(\kappa) & \sqrt{p}J_m'(\sqrt{p}\kappa)
    \end{pmatrix}\begin{bmatrix}
    \alpha\\-\beta
    \end{bmatrix}=\begin{bmatrix}
    0\\0
    \end{bmatrix},
\end{equation*}
which allows characterizing the point spectrum as
\begin{equation}\label{eq:disk}
    \Sigma_{\textup{disk}}(p)=\bigcup_{m=0}^\infty\left\{\kappa\in\mathbb{C}_{\neq0}:\textup{det}\begin{pmatrix}
    J_m(\kappa) & J_m(\sqrt{p}\kappa)\\
    J_m'(\kappa) & \sqrt{p}J_m'(\sqrt{p}\kappa)
    \end{pmatrix}=0\right\}.
\end{equation}
See \cite{PieKle24} for more details.

\section{Basics on parametric eigenvalue trajectories}\label{sec:eigs:basics}
The main objective of this work is to study how ITEs vary as the refractive index $n$ changes. For convenience we assume that the refractive index is \emph{parametrized}, i.e., that $n=n_p$, with $p\mapsto n_p(\cdot)\in\mathcal N$, cf.~\cref{eq:indexset}, a smooth-enough map whose parameter $p$ is assumed to be a real \emph{scalar}.

Parametrizing the refractive index in this way allows characterizing the resulting ITP \cref{eq:ITPweak} as an \emph{infinite-dimensional, parametric (linear) eigenvalue problem}, although discretization turns it finite-dimensional, with a potential loss of linearity, as will soon be shown. We may describe the (discretized or not) parametrized ITP in abstract form as

\begin{equation}\label{eq:nlevp}
    \forall p\ \text{find }\kappa=\kappa_p\in\mathbb{C}_{\neq0}\ \exists\mathbf{x}=\mathbf{x}_p\in X\setminus\{\mathbf{0}\}\ \text{such that}\ \mathbf{L}(\kappa_p,p)\mathbf{x}_p =\mathbf{0}.
\end{equation}
Above, $(\kappa,p)\mapsto\mathbf{L}(\kappa,p)$ is a function of two variables, taking values in the space $\mathcal L(X)$ of endomorphisms over some Hilbert space $X$, either $\mathcal V\subset[H^1(D;\mathbb C)]^2$ at the continuous level or some Euclidean space $\mathbb C^m$ after discretization, to which eigenvectors $\mathbf{x}$ belong. Throughout our discussion, we assume that $\mathbf{L}$ is holomorphic in $\kappa$ and at least continuous in $p$, which, in our ITP setting, simply requires $n_p$ to depend continuously on $p$.

As a specific instance of this, which will be of great interest to us, separation of variables \cref{eq:funDisk} allows us to study the ITP on the unit disk through discrete \emph{nonlinear} eigenproblems. More explicitly, given a set of $M$ Bessel indices $\{m_1,\ldots,m_M\}\subset\mathbb{N}_0$, we can define a $(2M)\times(2M)$ block-diagonal nonlinear eigenproblem through
\begin{equation*}
    \mathbf{L}:(\kappa,p)\mapsto\begin{pmatrix}
    J_{m_1}(\kappa) & J_{m_1}(\sqrt{p}\kappa) & & &\\
    J_{m_1}'(\kappa) & \sqrt{p}J_{m_1}'(\sqrt{p}\kappa) & & &\\
     & & \ddots & &\\
     & & & J_{m_M}(\kappa) & J_{m_M}(\sqrt{p}\kappa)\\
     & & & J_{m_M}'(\kappa) & \sqrt{p}J_{m_M}'(\sqrt{p}\kappa)
    \end{pmatrix},
\end{equation*}
cf.~\cref{eq:disk}. This is instrumental in studying the ITEs corresponding to the selected Bessel indices.

\begin{definition}[See, e.g., Chapter 2 of \cite{kato}]\label{def:trajectories}
    We call \emph{eigenvalue trajectory} (respectively \emph{eigenpair trajectory}) any map $p\mapsto\kappa_p$ (respectively $p\mapsto(\kappa_p,\mathbf{x}_p)$) implicitly defined in \cref{eq:nlevp}.
\end{definition}

Due to freedom in indexing, such trajectories are not uniquely determined. However, by a perturbative argument valid whenever $\mathbf{L}$ is continuous in $p$, one can always label eigenvalue trajectories in such a way that they are globally continuous with respect to $p$ \cite{kato}.

More generally, if $\mathbf{L}$ depends smoothly on $p$ (i.e., in our setting, if the mapping $p\mapsto n_p$ is smooth), one might expect the eigenpair trajectories to inherit such smoothness. Thanks to a version of the implicit function theorem, one can show that this is true for trajectories involving \emph{simple eigenpairs}, as well as for \emph{semi}simple eigenpairs, although only in a weakened form.

\begin{theorem}[See, e.g., Sections 2 and 5 in \cite{AnChLa93}]\label{thm:simple}
    Let $(\kappa,p)\mapsto\mathbf{L}(\kappa,p)$ be a square-matrix-valued function, holomorphic in $\kappa$ and continuously differentiable $k$ times (resp.~analytic) with respect to $p$ at $(\kappa^\star,p^\star)\in\mathbb{C}\times\mathbb R$, $k\geq 1$. We assume that $\mathbf{L}$ is \emph{non-degenerate}, in the sense that, for all $p$ in a neighborhood of $p^\star$, there exists some $z\in\mathbb{C}$ such that $\det\mathbf{L}(z,p)\neq0$.

    \begin{itemize}
        \item Let $\kappa^\star$ be a \emph{semisimple} eigenvalue\footnote{The provided definition of semisimplicity is in terms of Jordan chains. One may alternatively define it by asking for equal algebraic and geometric multiplicities of eigenvalue $\kappa^\star$, or by requiring $\kappa^\star$ to be a simple pole of the meromorphic function $\kappa\mapsto\mathbf{L}(\kappa,p^\star)^{-1}$ \cite{AnChLa93}.} at $p=p^\star$, i.e., $\det\mathbf{L}(\kappa^\star,p^\star)=0$ and $\partial_\kappa\mathbf{L}(\kappa^\star,p^\star)\mathbf{v}\notin\textup{Im}\mathbf{L}(\kappa^\star,p^\star)$ for all $\mathbf{v}\in\textup{Ker}\mathbf{L}(\kappa^\star,p^\star)\setminus\{0\}$. There exist $M=\textup{dim}(\textup{Ker}\mathbf{L}(\kappa^\star,p^\star))$ distinct continuous eigenpair trajectories such that $\kappa_{p^\star}=\kappa^\star$. All such trajectories are continuously differentiable (once) at $p^\star$.
        \item Let $\kappa^\star$ be a \emph{simple} eigenvalue at $p=p^\star$, i.e., $\kappa^\star$ is a semisimple eigenvalue and $M=\textup{dim}(\textup{Ker}\mathbf{L}(\kappa^\star,p^\star))=1$. The unique continuous eigenpair trajectory such that $\kappa_{p^\star}=\kappa^\star$ is continuously differentiable $k$ times (resp.~analytic) at $p^\star$.
    \end{itemize}
\end{theorem}

\begin{remark}\label{rem:simpleinfinite}
    Under mild conditions, this result may be extended to the general, infinite-dimensional setting. In such cases, however, the notion of ``eigenvalue (semi)simplicity'' must be characterized in terms of (generalized) eigenspace dimensions. See, e.g., \cite[Section 7.3]{kato}.
\end{remark}

\Cref{thm:simple} is useful because, roughly speaking, in most cases of interest (including our target parametric ITP), one can empirically and theoretically verify that ``most'' eigenpairs are (semi)simple for ``most'' parameter values $p$ \cite{kato}. However, even in cases where $\mathbf{L}$ is smooth, it is possible for \emph{exceptional points} to arise, in the following sense.
\begin{definition}[See, e.g., Sections 2.1.1 and 2.6.4 of \cite{kato}]
    Given a continuous eigenpair trajectory $p\mapsto(\kappa_p,\mathbf{x}_p)$, implicitly defined by a problem \cref{eq:nlevp} with $\mathbf{L}$ analytic in $\kappa$ and continuously differentiable in $p$, an exceptional point $p^\star$ is any point where the eigenpair is not continuously differentiable with respect to $p$.
\end{definition}

In general, exceptional points may be categorized using one of the following labels.
\begin{itemize}
\item \emph{Smooth crossings.} These are points where two or more smooth eigenvalue trajectories cross, leading to semisimple eigenpairs, without any loss of smoothness. These points are exceptional because lack of smoothness may arise from a mislabeling of the eigenvalue curves, although the singularity is ``removable'' through proper labeling. See, e.g., \cref{fig:nonsmoothness} (left).

\begin{figure}
    \centering
    \includegraphics{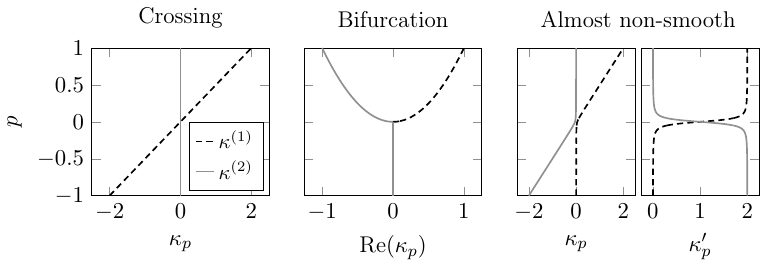}
    \caption{Examples of non-smooth eigenpair behavior. We set $\varepsilon=0.05$ in the right plots.}
    \label{fig:nonsmoothness}
\end{figure}

\item \emph{Algebraic bifurcations.} These are locations where an eigenvalue trajectory can be locally expressed as a function with an algebraic singularity.
\begin{definition}[See, e.g., Section 2.1.1 of \cite{kato}]\label{def:bifurcation}
    An eigenpair trajectory $p\mapsto\kappa_p$ has an (algebraic) bifurcation point of order $M>1$ at $p^\star$ if there exist $(M-1)$ further \emph{distinct} ITE trajectories $p\mapsto(\kappa_p^{(1)},\ldots,\kappa_p^{(M-1)})\in\mathbb C^{M-1}$ with $\kappa_{p^\star}^{(j)}=\kappa_{p^\star}$ for all $j$, such that $(\kappa_p-\kappa_{p^\star})\sim(p-p^\star)^{1/M}\sim(\kappa_p^{(j)}-\kappa_{p^\star})$ as $p\to p^\star$ for all $j$.
\end{definition}

For instance \cite{kato},
\begin{equation*}
    \mathbf{L}(\kappa,p)=\begin{pmatrix}
        \kappa & 1\\ p & \kappa
    \end{pmatrix}
\end{equation*}
admits the two non-smooth eigenpair trajectories
\begin{equation*}
    (\kappa_p^{(1)},\mathbf{x}_p^{(1)})=(p^{1/2},[1,-p^{1/2}]^\top)\quad\text{and}\quad(\kappa_p^{(2)},\mathbf{x}_p^{(2)})=(-p^{1/2},[1,p^{1/2}]^\top),
\end{equation*}
with $p^{1/2}$ being a branch of the complex square root for $p<0$. See \cref{fig:nonsmoothness} (center).

\item \emph{Fundamentally non-smooth behavior.} More complex behavior may arise if non-simplici\-ty persists over positive-measure ranges of $p$, in case of \emph{permanent degeneracy}, or if the dependence on $p$ is less regular \cite[Sections 2.1.1 and 2.5.5]{kato}.
\end{itemize}

It is also useful to note that eigenpair trajectories, even if smoothly dependent on $p$, may be highly sensitive to $p$ due to an exceptional point arising for a nearby \emph{complex} value of the parameter. For instance, based on an example from \cite{stephen},
\begin{equation*}
    \mathbf{L}(\kappa,p)=\begin{pmatrix}
        \kappa-2p & \varepsilon\\ \varepsilon & \kappa
    \end{pmatrix},
\end{equation*}
admits the two smooth eigenpair trajectories
\begin{equation*}
    (\kappa_p^{(1,2)},\mathbf{x}_p^{(1,2)})=(p\pm\sqrt{p^2+\varepsilon^2},[\varepsilon,p\mp\sqrt{p^2+\varepsilon^2}]^\top),
\end{equation*}
cf.~\cref{fig:nonsmoothness} (right). 
This labeling of the eigenvalues, which results in smooth trajectories for $0\neq\varepsilon\to0$, yields non-smooth trajectories at the limit $\varepsilon=0$. We will informally refer to such cases as ``almost-exceptional points''.

\section{Parametric ITE trajectories}\label{sec:paraITP}
Looking specifically at ITE trajectories, in \cite{PieKle24} the authors provide a characterization of a family of exceptional points for the ITP on the unit disk with homogeneous refractive index. As summarized below in \cref{prop:disk:bifurc}, it was shown that this problem displays bifurcation points located on the real axis, whenever the imaginary part of an ITE trajectory becomes zero, i.e., when a non-real eigenvalue trajectory intersects the real axis.

As our first novel result, we show that this result applies also in arbitrary domains, with refractive indices that depend on the parameter in general ways. In particular, the framework also includes inhomogeneous media, since the refractive index may be non-uniform or even spatially discontinuous.

\begin{theorem}\label{thm:bifurc}
    Consider a generic ITP in weak form \cref{eq:ITPweak}. Assume that the refractive index is dependent on a parameter $n=n_p\in\mathcal N$, cf.~\cref{eq:indexset}, with $p\mapsto n_p$ continuously differentiable over $(p^\star-\varepsilon,p^\star+\varepsilon)$, for some fixed $p^\star\in\mathbb R$ and some small enough $\varepsilon>0$.
    
    Take an eigenpair trajectory
    \begin{equation}\label{eq:eigenpair}
        (p^\star-\varepsilon,p^\star+\varepsilon)\ni p\mapsto(\kappa_p,(v_p,w_p))\in\mathbb{C}_{\neq0}\times\mathcal V_{\neq0},
    \end{equation}
    continuous (in $\mathbb{C}\times[H^1(D;\mathbb C)]^2$) on $(p^\star-\varepsilon,p^\star+\varepsilon)$ and continuously differentiable (in $\mathbb{C}\times[H^1(D;\mathbb C)]^2$) on $(p^\star-\varepsilon,p^\star+\varepsilon)\setminus\{p^\star\}$. Also, assume that $\int_Dn_{p^\star}'\left|w_{p^\star}\right|^2\neq 0$, with $n_{p^\star}'\in L^\infty(D)$ denoting the derivative of $p\mapsto n_p$ at $p=p^\star$.

    Assume that $\kappa_p\in\mathbb{C}\setminus\mathbb{R}$ for all $p\in(p^\star-\varepsilon,p^\star)$ (resp., for all $p\in(p^\star,p^\star+\varepsilon)$) but $\kappa_{p^\star}\in\mathbb{R}\setminus\{0\}$, i.e., the chosen eigenvalue trajectory ``touches'' the real axis from outside it when $p=p^\star$. Then $p=p^\star$ gives rise to an exceptional point, since the eigenpair trajectory is non-differentiable there.
\end{theorem}

The above result shows that any smooth ITP (not just the homogeneous one on the disk) may display non-smooth behavior in its eigenpair trajectories whenever the imaginary part of an eigenvalue trajectory transitions from nonzero to zero. The proof of \cref{thm:bifurc} relies on the following two technical lemmas.

\begin{lemma}\label{lem:zeronorm}
    Consider the ITP in weak form \cref{eq:ITPweak}, with $n\in\mathcal N$, cf.~\cref{eq:indexset}. Let $\kappa\in\mathbb{C}\setminus\mathbb{R}$ be a \emph{non-real} eigenvalue with eigenfunction pair $(v,w)\in\mathcal V_{\neq0}$. Then $\|v\|_{L^2(D;\mathbb C)}=\|\sqrt nw\|_{L^2(D;\mathbb C)}$.
\end{lemma}
\begin{proof}
    See \cite[Lemma 2.1]{PieKle24}. For an alternative proof, it suffices to plug the eigenfunctions as test functions ($\phi=v$ and $\psi=w$) in the weak formulation \cref{eq:ITPweak} and take the imaginary part of the resulting expression.
\end{proof}

\begin{lemma}\label{lem:bifurc_ord}
    Consider an eigenpair trajectory $p\mapsto(\kappa_p,(v_p,w_p))$ for an ITP in weak form \cref{eq:ITPweak} whose refractive-index parametrization is continuously differentiable. Define
    \begin{equation}\label{eq:bifurc_indicator}
        I(p):=\int_D\left(\left|v_p\right|^2-n_p\left|w_p\right|^2\right)=\|v_p\|_{L^2(D;\mathbb C)}^2-\|\sqrt{n_p}w_p\|_{L^2(D;\mathbb C)}^2
    \end{equation}
    and assume that the eigenpair trajectory is continuously differentiable at $p$ and such that $\kappa_p\in\mathbb R\setminus\{0\}$. Then the derivative of the eigenvalue trajectory at $p$ satisfies
    \begin{equation}\label{eq:bifurc_ord}
        I(p)\kappa_p'=\frac12\kappa_p\int_Dn_p'\left|w_p\right|^2,
    \end{equation}
    with $n_p'$ denoting the derivative of the refractive-index map $p\mapsto n_p$.
\end{lemma}
\begin{proof}
    We can differentiate the weak formulation \cref{eq:ITPweak} with respect to $p$ (for fixed test functions $\phi$ and $\psi$), e.g., in a Fr\'echet sense, to obtain a weak formulation satisfied by the eigenpair derivatives $(\kappa_p',(v_p',w_p'))\in\mathbb C\times\mathcal V$:
    \begin{equation*}
        \int_D\left(\nabla v_p'\cdot\nabla\overline{\phi}-\nabla w_p'\cdot\nabla\overline{\psi}\right)=2\kappa_p\kappa_p'\int_D\left(v_p\overline{\phi}-n_pw_p\overline{\psi}\right)+\kappa_p^2\int_D\left(v_p'\overline{\phi}-n_p'w_p\overline{\psi}-n_pw_p'\overline{\psi}\right).
    \end{equation*}
    
    Plugging the eigenfunctions as test functions $\phi=v_p$ and $\psi=w_p$ yields
    \begin{equation*}
        \int_D\left(\nabla v_p'\cdot\nabla\overline{v_p}-\nabla w_p'\cdot\nabla\overline{w_p}\right)=2\kappa_p\kappa_p'I(p)+\kappa_p^2\int_D\left(v_p'\overline{v_p}-n_p'\left|w_p\right|^2-n_pw_p'\overline{w_p}\right).
    \end{equation*}
    By the complex conjugate of the weak formulation \cref{eq:ITPweak} with test functions $\phi=v_p'$ and $\psi=w_p'$, the left-hand side equals
    \begin{equation*}
        \overline{\kappa_p}^2\int_D\left(v_p'\overline{v_p}-n_pw_p'\overline{w_p}\right),
    \end{equation*}
    so that
    \begin{equation*}
        2\kappa_p\kappa_p'I(p)-\kappa_p^2\int_Dn_p'\left|w_p\right|^2=(\overline{\kappa_p}^2-\kappa_p^2)\int_D\left(v_p'\overline{v_p}-n_pw_p'\overline{w_p}\right).
    \end{equation*}
    Since $\kappa_p\in\mathbb R\setminus\{0\}$ by assumption, the right-hand side vanishes and the claim follows.
\end{proof}

We are finally ready to prove \cref{thm:bifurc}.

\begin{proof}[Proof of \cref{thm:bifurc}]
    By way of contradiction, assume that $p\mapsto(\kappa_p,(v_p,w_p))$ is continuously differentiable at $p=p^\star$, so that \cref{lem:bifurc_ord} holds there. Recall that $\kappa_p\in\mathbb C\setminus\mathbb R$ for all $p$ in a one-sided neighborhood of $p^\star$, so that $I(p)=0$ for all such values of $p$ by \cref{lem:zeronorm}. Since the eigenfunction trajectories are continuous, so is $I$. As such, $I(p^\star)=0$ and \cref{eq:bifurc_ord} cannot hold at $p=p^\star$, a contradiction.
\end{proof}

Before proceeding further, we note that \cref{thm:bifurc} makes the technical assumption that $\int_Dn_{p^\star}'\left|w_{p^\star}\right|^2\neq 0$, which is necessary to ensure that \cref{eq:bifurc_ord} yields a contradiction. The following practical condition is sufficient to guarantee that this property holds.
\begin{lemma}\label{lem:nprime}
    With the notation of \cref{thm:bifurc}, if $n_{p^\star}'>0$ a.e.~on $D$ or $n_{p^\star}'<0$ a.e.~on $D$, then $\int_Dn_{p^\star}'\left|w_{p^\star}\right|^2\neq 0$.
\end{lemma}
\begin{proof}
    By the assumption on the sign of $n_{p^\star}'$, $\int_Dn_{p^\star}'\left|w_{p^\star}\right|^2=0$ requires $w_{p^\star}=0$ a.e.~on $D$, i.e., since $w_{p^\star}\in H^1(D)$, $w_{p^\star}\equiv0$. But then the ITP equations \cref{ITPsetup} imply that $v_{p^\star}$ satisfies both homogeneous Dirichlet \emph{and} homogeneous Neumann boundary conditions on $\partial D$. Unique-continuation arguments \cite[Section 4.2.2]{kirsch} imply that both eigenfunctions are then identically zero, a contradiction.
\end{proof}

Intuitively, the above result states that, if $n_p'$ has a unique nonzero sign over $D$, variations in $p$ have a nonzero ``net effect'' over the domain, which breaks spectral smoothness. It is our belief that it might be possible to manufacture ITPs where $n_{p^\star}'$ changes sign within $D$ and exceptional points disappear because of it. However, further research is needed to verify this. Indeed, there is a substantial gap in the literature for such ``zero-net-effect'' cases, e.g., ITPs whose inhomogeneous refractive indices are somewhere below and somewhere above unity \cite{bookcakoni2022}.

\subsection{Characterizing exceptional points by a scalar indicator}

By inspection of the proof of \cref{thm:bifurc}, we can characterize when exceptional phenomena occur in an alternative way.

\begin{corollary}\label{cor:bifurc}
    \Cref{thm:bifurc} remains valid if the hypothesis that ``$\kappa_p\in\mathbb{C}\setminus\mathbb{R}$ for all $p\in(p^\star-\varepsilon,p^\star)$ (resp., for all $p\in(p^\star,p^\star+\varepsilon)$)'' is replaced by ``$I(p^\star)=0$, cf.~\cref{eq:bifurc_indicator}''.
\end{corollary}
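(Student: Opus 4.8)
The plan is to show that the one-sided non-reality hypothesis of \cref{thm:bifurc} is needed only to secure the algebraic identity $\int_D|v_{p^\star}|^2=\int_Dn_{p^\star}|w_{p^\star}|^2$ at the single point $p^\star$; since \cref{cor:bifurc} posits this identity directly, I can re-run the core of the argument of \cref{thm:bifurc} (see \cref{sec:proof:bifurc}) with essentially no change. First I would record the forward implication, to confirm the corollary genuinely weakens the hypothesis. Testing the weak form \cref{eq:ITPweak} against the eigenpair itself, i.e.\ with $(\phi,\psi)=(v_p,w_p)$, gives $\int_D(|\nabla v_p|^2-|\nabla w_p|^2)=\kappa_p^2\int_D(|v_p|^2-n_p|w_p|^2)$, where the left-hand side and the integral on the right are both real. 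On the side where $\kappa_p\in\mathbb{C}\setminus\mathbb{R}$, continuity of $p\mapsto\kappa_p$ together with $\kappa_{p^\star}\in\mathbb{R}\setminus\{0\}$ forces $\mathrm{Re}(\kappa_p)\neq0$ near $p^\star$, hence $\mathrm{Im}(\kappa_p^2)=2\,\mathrm{Re}(\kappa_p)\,\mathrm{Im}(\kappa_p)\neq0$ and $\kappa_p^2\notin\mathbb{R}$; reality of the product then forces $\int_D(|v_p|^2-n_p|w_p|^2)=0$ on that side, and continuity of the eigenpair propagates this to $p=p^\star$. This is precisely the replacement hypothesis.

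With the identity $\int_D|v_{p^\star}|^2=\int_Dn_{p^\star}|w_{p^\star}|^2$ available, I would reproduce the differentiation argument. Write $B$ and $C_p$ for the two sesquilinear forms in \cref{eq:ITPweak}; both are Hermitian (using that $n_p$ is real), and crucially $B$ does not depend on $p$. Assume toward a contradiction that the eigenpair trajectory is continuously differentiable at $p^\star$, with finite derivatives $\kappa_{p^\star}'$ and $(v_{p^\star}',w_{p^\star}')$. Differentiating the eigenrelation $B((v_p,w_p),\zeta)=\kappa_p^2\,C_p((v_p,w_p),\zeta)$ in $p$ at $p^\star$, with the test argument $\zeta$ held fixed, and noting that $\partial_pC_p((v,w),(v,w))=-\int_Dn_{p^\star}'|w|^2$, I obtain upon choosing $\zeta=(v_{p^\star},w_{p^\star})$ a relation linking $B$-, $C_{p^\star}$- and $\int_Dn_{p^\star}'|w_{p^\star}|^2$-terms. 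Hermitian symmetry of $B,C_{p^\star}$ applied to the eigenfunction yields the adjoint relation $B((v_{p^\star}',w_{p^\star}'),(v_{p^\star},w_{p^\star}))=\overline{\kappa_{p^\star}^2}\,C_{p^\star}((v_{p^\star}',w_{p^\star}'),(v_{p^\star},w_{p^\star}))$, which eliminates the $B$-term and leaves $(\overline{\kappa_{p^\star}^2}-\kappa_{p^\star}^2)\,C_{p^\star}\big((v_{p^\star}',w_{p^\star}'),(v_{p^\star},w_{p^\star})\big)=2\kappa_{p^\star}\kappa_{p^\star}'\!\int_D(|v_{p^\star}|^2-n_{p^\star}|w_{p^\star}|^2)-\kappa_{p^\star}^2\!\int_D n_{p^\star}'|w_{p^\star}|^2$.

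Evaluating this at $p^\star$ is where the hypotheses collapse the expression. Since $\kappa_{p^\star}\in\mathbb{R}$, the prefactor $\overline{\kappa_{p^\star}^2}-\kappa_{p^\star}^2$ vanishes; and since $\int_D(|v_{p^\star}|^2-n_{p^\star}|w_{p^\star}|^2)=0$ by the replacement hypothesis, the first term on the right also vanishes, leaving $0=\kappa_{p^\star}^2\int_Dn_{p^\star}'|w_{p^\star}|^2$. This contradicts $\kappa_{p^\star}\neq0$ together with the standing assumption $\int_Dn_{p^\star}'|w_{p^\star}|^2\neq0$, so the trajectory cannot be differentiable at $p^\star$, establishing the exceptional point. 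The complex-conjugate part of the conclusion is unaffected, since it merely reflects the symmetry $(\kappa_p,(v_p,w_p))\mapsto(\overline{\kappa_p},(\overline{v_p},\overline{w_p}))$ forced by the reality of $D$ and $n_p$, which holds independently of whether $\kappa_p$ is real.

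I expect the main obstacle to be the careful handling of the eigenfunction-derivative term $C_{p^\star}\big((v_{p^\star}',w_{p^\star}'),(v_{p^\star},w_{p^\star})\big)$: one must verify it is \emph{finite}, which is exactly what $C^1$-differentiability at $p^\star$ supplies under the contradiction hypothesis, so that multiplying it by the vanishing prefactor $\overline{\kappa_{p^\star}^2}-\kappa_{p^\star}^2$ is legitimate rather than an indeterminate $0\cdot\infty$. One must also justify differentiating \cref{eq:ITPweak} in the infinite-dimensional setting, relying on the regularity afforded by smoothness of $D$ and the assumed one-sided differentiability of the trajectory. A secondary point worth flagging is that retaining $\kappa_{p^\star}\in\mathbb{R}\setminus\{0\}$ (which \cref{cor:bifurc} does not replace) is indispensable: the vanishing of the Hermitian prefactor uses $\kappa_{p^\star}\in\mathbb{R}$, and the final nonvanishing of $\kappa_{p^\star}^2$ uses $\kappa_{p^\star}\neq0$.
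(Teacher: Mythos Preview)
Your proposal is correct and follows essentially the same route as the paper's own argument in \cref{sec:proof:bifurc}: differentiate the weak eigenrelation, use the Hermitian structure to eliminate the ``derivative-of-eigenfunction'' contribution, and arrive at the identity $(\overline{\kappa_{p^\star}^2}-\kappa_{p^\star}^2)\,C_{p^\star}((v_{p^\star}',w_{p^\star}'),(v_{p^\star},w_{p^\star}))=2\kappa_{p^\star}\kappa_{p^\star}'\,I(p^\star)-\kappa_{p^\star}^2\int_D n_{p^\star}'|w_{p^\star}|^2$, which collapses to the desired contradiction under $\kappa_{p^\star}\in\mathbb{R}$ and $I(p^\star)=0$. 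The paper reaches the same identity (its equation \eqref{realimagkappa}) via explicit integration by parts and real/imaginary-part bookkeeping rather than your abstract sesquilinear-form manipulation, but the two derivations are equivalent up to a factor of~$2$; your Hermitian-symmetry step replaces the paper's longer computation, and your preliminary paragraph showing that the original one-sided non-reality hypothesis already forces $I(p^\star)=0$ is a useful remark (implicit in the paper via \cref{lem:zeronorm}) confirming that the corollary genuinely weakens the theorem.
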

\begin{proof}
    The non-realness of the ITE trajectory on one side of $p^\star$ is used in the proof of \cref{thm:bifurc} only to conclude that $I(p^\star)=0$, a fact that we are explicitly assuming here.
\end{proof}

This allows us to conclude that non-smooth eigenpair behavior generally arises whenever the key condition $\|v_{p^\star}\|_{L^2(D;\mathbb C)}\neq\|\sqrt{n_{p^\star}}w_{p^\star}\|_{L^2(D;\mathbb C)}$ is violated at some \emph{real} eigenvalue $\kappa_{p^\star}$ (since $I(p)\equiv0$ on non-real ITE trajectories by \cref{lem:zeronorm}). Crucially, the hypotheses of \cref{cor:bifurc} do not require non-realness of the eigenvalues, so that the result applies even to purely real eigenvalue trajectories. Unlike \cref{thm:bifurc}, this enables the identification of non-smoothness in the ITP eigenpair curves even without leaving the real axis. Specifically, given an eigenpair trajectory whose eigenvalue is real for all $p$, we can easily determine whether the trajectory is involved in any exceptional points by just checking if the indicator\footnote{Even though $I$ depends on the eigenpair trajectory, we denote it as a function of just $p$ for conciseness.} $I$ from \cref{eq:bifurc_indicator} is zero at any $p$.

\subsection{When exceptional points are bifurcations}

Recall \cref{lem:bifurc_ord}, which only applies to smooth portions of eigenpair trajectories. We may use it in a limiting argument to study exceptional points. Notably, the following result provides conditions under which the \emph{exceptional points} in eigen\emph{pair} trajectories predicted by \cref{thm:bifurc,cor:bifurc} correspond to \emph{algebraic bifurcation points} in eigen\emph{value} trajectories.

\begin{theorem}\label{cor:bifurc_sufficient}
    Consider an ITP as in \cref{thm:bifurc}, whose refractive index $n_p$ depends analytically on $p\in(p^\star-\varepsilon,p^\star+\varepsilon)$ and such that $\int_Dn_{p^\star}'\left|w_{p^\star}\right|^2\neq 0$. Take a continuous eigenpair trajectory \cref{eq:eigenpair} that is continuously differentiable on a punctured neighborhood of $p^\star$, such that $\kappa_{p^\star}\in\mathbb R$. Let $M\in\{2,3,\ldots\}$. The following statements are equivalent:
    \begin{enumerate}[label=(\roman*)]
        \item\label{itemtwo} $p^\star$ is an order-$(1-\frac1M)$ root of $I$, i.e., $I(p)=(p-p^\star)^{1-1/M}\widetilde{I}(p)$ for some function $\widetilde{I}$ that is continuous and nonzero on a one-sided closed neighborhood of $p^\star$;
        \item\label{itemone} $I(p)=(\kappa_p-\kappa_{p^\star})^{M-1}\overline{I}(p)$ for some function $\overline{I}$, continuous and nonzero on a one-sided closed neighborhood of $p^\star$;
        \item\label{itemthree} the eigenpair trajectory has a bifurcation of order $M$ at $p^\star$, cf.~\cref{def:bifurcation}.
    \end{enumerate}
\end{theorem}
\begin{proof}
    First, note that, under either \ref{itemtwo} or \ref{itemone}, $p^\star$ must be an exceptional point. Indeed, since $I$ is non-identically zero on a one-sided neighborhood $U_{p^\star}$ of $p^\star$, the eigenvalue trajectory must be purely real for $p\in U_{p^\star}$, cf.~\cref{lem:zeronorm}. We can thus apply \cref{lem:bifurc_ord}. By continuity of the eigenpair trajectory, we can look at the limit of the right-hand side of \cref{eq:bifurc_ord} as $U_{p^\star}\ni p\to p^\star$. Since both $\kappa_{p^\star}$ and $\int_Dn_{p^\star}'\left|w_{p^\star}\right|^2$ are nonzero by assumption while $I(p^\star)=0$, $\kappa_p'\sim1/I(p)$ is unbounded as $U_{p^\star}\ni p\to p^\star$.
    
    To determine the local behavior of $\kappa_p$ around the singularity (which will be later needed to characterize a bifurcation and its order), we can use a perturbation argument as in \cite{HrLa99}. Namely, thanks to the analytic dependence on $p$, a \emph{Puiseux series} for $\kappa_p$ must exist: for some $M'\in\{2,3,\ldots\}$, and $c\in\mathbb{C}_{\neq0}$,
    \begin{equation*}
        \kappa_p=\kappa_{p^\star}+c(p-p^\star)^{1/M'}+\ord{(p-p^\star)^{2/M'}}\quad\text{for }p\in U_{p^\star},
    \end{equation*}
    with $(\cdot)^{1/M'}$ a fixed branch of the complex $M'$-th root.
    
    We move now to the equivalence of the three statements.
    \begin{itemize}
        \item $\ref{itemtwo}\Rightarrow\ref{itemone}$. We restrict $p$ to $U_{p^\star}$. Note that $\kappa_p'=\ord{(p-p^\star)^{-1+1/M'}}$ by differentiating the Puiseux series. Moreover, $\kappa_p'=\ord{1/I(p)}=\ord{(p-p^\star)^{-1+1/M}}$ by \cref{eq:bifurc_ord}. This shows that $M=M'$ by equating the asymptotic rates of $\kappa_p'$ as $U_{p^\star}\ni p\to p^\star$ in the Puiseux series and in \cref{eq:bifurc_ord}. Moreover,
        \begin{align*}
            I(p)=(p-p^\star)^{1-1/M}\widetilde{I}(p)&=\left(c^{M-1}(\kappa_p-\kappa_{p^\star})^{M-1}+\ord{(p-p^\star)^{2-2/M}}\right)\widetilde{I}(p)\\
            &=:(\kappa_p-\kappa_{p^\star})^{M-1}\overline{I}(p).
        \end{align*}
        \item $\ref{itemone}\Rightarrow\ref{itemthree}$. As above, we restrict $p$ to $U_{p^\star}$ and we show that $M=M'$ by equating the asymptotic rates of $\kappa_p'$ as $U_{p^\star}\ni p\to p^\star$ in the Puiseux series and in \cref{eq:bifurc_ord}. An order-$(1/M)$ algebraic singularity in the trajectory of one of the roots of an analytic function with analytic dependence on a parameter characterizes an order-$M$ bifurcation \cite{HrLa99}.
        \item $\ref{itemthree}\Rightarrow\ref{itemtwo}$. We first need to show that any algebraic bifurcation around a real eigenvalue must involve at least one locally purely real eigenvalue curve, so that we may use \cref{lem:bifurc_ord} to conclude that $I$ is not uniformly zero. This is easily seen by going back to the above Puiseux series. Extending it to all $M$ trajectories involved in the bifurcations results in
        \begin{equation*}
            \kappa_p^{(j)}=\kappa_{p^\star}+ce^{2ij\pi/M}(p-p^\star)^{1/M}+\ord{|p-p^\star|^{2/M}},\quad j=1,\ldots,M,
        \end{equation*}
        with $c\neq0$ by assumption. Since the ITP spectrum is closed under complex conjugation \cite{PieKle24}, the complex argument of $c$ is constrained: $c=|c|e^{ik\pi/M}$ for some $k\in\mathbb{Z}$. As such, there must exist at least one real eigenvalue curve for $p<p^\star$ (if $k$ is odd) or for $p>p^\star$ (if $k$ is even). The claim then follows, as in the previous case, by equating the asymptotic rates of $\kappa_p'$ as $U_{p^\star}\ni p\to p^\star$ in the Puiseux series and in \cref{eq:bifurc_ord}.
    \end{itemize}
    The claim follows.
\end{proof}

\Cref{cor:bifurc_sufficient} is useful for studying the existence of (real) exceptional points in ITE trajectories. We showcase this in our numerical experiments below. As a related point of note, we can also observe the following.

\begin{remark}\label{rem:angle}
    The proof of \cref{cor:bifurc_sufficient} exploits the fact that the angles that the bifurcating eigenvalue trajectories form with respect to the real axis are constrained. In particular, an order-$M$ bifurcation must involve a (locally) purely real branch in the eigenvalue trajectories:
    \begin{itemize}
        \item If $M$ is even, there must exist exactly two real branches for $p<p^\star$ (resp.~$p>p^\star$) and no real branches for $p>p^\star$ (resp.~$p<p^\star$).
        \item If $M$ is odd, there must exist exactly one real branch for $p\neq p^\star$.
    \end{itemize}
\end{remark}

Differently from our previous results, \cref{cor:bifurc_sufficient} requires analytic dependence of $n_p$ on $p$, rather than just continuous differentiability, in order to invoke tools from analytic perturbation theory \cite{HrLa99,kato}. This is necessary since bifurcations are, by definition, phenomena that require locally analytic parameter dependence. We note that a smooth (in fact, polynomial) parametrization $p\mapsto n_p$ is already natural in many settings. On the other hand, we stress that no smoothness in the spatial dependence of $n_p$ is required.

\subsection{Global result on spectral non-smoothness for ITPs}

To summarize our results, we state the following theorem that foregoes assumptions on the smoothness of ITE trajectories in favor of smoothness requirements on the ITP, which are more easily verified.

\begin{theorem}\label{thm:bifurc_general}
    Let $\mathcal P=(p_{\min},p_{\max})\subset\mathbb R$ and consider a generic ITP over a $C^1$ domain $D$, whose parametric refractive index $\mathcal P\ni p\mapsto n_p\in\mathcal N$ admits a continuous derivative (in $L^\infty(D)$) with respect to $p\in\mathcal P$, which satisfies either $n_p'>0$ a.e.~on $D$ or $n_p'<0$ a.e.~on $D$ for all $p\in\mathcal P$. Take a continuous eigenpair trajectory, whose nonzero eigenvalue is semisimple, cf.~\cref{thm:simple}, uniformly in $p$, with the possible exception of a finite set of $p$ values.
    
    Then the eigenpair trajectory is continuously differentiable, with the possible exception of the values of $p\in\mathcal P$ where the eigenvalue ceases to be semisimple. Moreover, \cref{thm:bifurc}, \cref{lem:bifurc_ord}, and \cref{cor:bifurc} apply. If $n_p$ depends analytically on $p\in\mathcal P$, then \cref{cor:bifurc_sufficient} also applies.
\end{theorem}
\begin{proof}
    Due to the existence of eigenvalue-free regions \cite{cakoni2010interior}, the ITP is obviously non-de\-ge\-ne\-rate. \Cref{thm:simple} can then be applied, since its smoothness assumptions are verified, cf.~\cref{rem:simpleinfinite}. As such, any (portion of an) eigenpair trajectory such that the eigenvalue remains semisimple is continuously differentiable in $p$. The hypothesis that $\int_Dn_{p^\star}'\left|w_{p^\star}\right|^2\neq 0$, needed to apply \cref{thm:bifurc}, \cref{cor:bifurc}, and \cref{cor:bifurc_sufficient}, holds by \cref{lem:nprime}.
\end{proof}


\section{The ITP for the disk}\label{sec:disk}
Recall the ITP on the unit disk with homogeneous refractive index, whose spectrum is characterized by the family of $2\times2$ nonlinear eigenvalue problems in \cref{eq:disk}. As already mentioned, our results apply to this problem.

\begin{proposition}\label{prop:disk:bifurc}
    Consider the ITP for the disk with parametric refractive index $n_p\equiv p$, with $\mathcal P=(p_{\min},p_{\max})\subset\mathbb R_{>0}\setminus\{1\}$, and take any continuous eigenpair trajectory whose nonzero eigenvalue is uniformly (in $p$) semisimple except at a finite set of values of $p$.
    
    Then the eigenpair trajectory is locally analytic, with the possible exception of the values of $p$ where the eigenvalue ceases to be semisimple. Moreover, \cref{thm:bifurc}, \cref{lem:bifurc_ord}, \cref{cor:bifurc}, and \cref{cor:bifurc_sufficient} apply. Notably, any bifurcation involving a locally real eigenvalue occurs precisely at a Bessel zero and is cubic.
\end{proposition}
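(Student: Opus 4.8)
The plan is to exploit the explicit $2\times2$ block structure behind \cref{eq:disk}: for each Bessel index $m$ the governing block is $\mathbf{L}_m$, with determinant $D_m(\kappa,p)=pJ_m(\kappa)J_m'(p\kappa)-J_m(p\kappa)J_m'(\kappa)$. Since the $J_m$ are entire, $\mathbf{L}_m$ is holomorphic in $\kappa$; since $p\mapsto J_m(p\kappa)$ and $p\mapsto pJ_m'(p\kappa)$ are analytic, it is analytic in $p$; and since the ITP spectrum is discrete, $D_m(\cdot,p)\not\equiv0$, so $\mathbf{L}_m$ is non-degenerate in the sense of \cref{thm:simple}. Applying \cref{thm:simple} at every $p$ where the eigenvalue is simple gives the analytic (hence smooth) local dependence of the eigenpair, which proves the first claim away from the finitely many non-simple parameters. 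For the second claim I would check the single structural hypothesis shared by \cref{thm:bifurc,cor:bifurc,lem:bifurc_ord,cor:bifurc_sufficient}, namely $\int_D n_{p^\star}'|w_{p^\star}|^2\neq0$. Here $n_p\equiv p^2$ is analytic with $n_p'=2p$, so this integral equals $2p^\star\|w_{p^\star}\|_{L^2(D)}^2>0$, since $p^\star>0$ and $w_{p^\star}\not\equiv0$; the latter holds because $J_m$ and $J_m'$ share no common zero, so the boundary conditions cannot be satisfied with $\beta=0$ (and symmetrically $\alpha\neq0$). The same analyticity of $n_p$ also supplies the hypothesis required by \cref{cor:bifurc_sufficient}.

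The core of the argument is an explicit evaluation of the indicator \cref{eq:bifurc_indicator} along a locally real branch. Writing the eigenfunctions as in \cref{eq:funDisk} with real $\alpha,\beta$ (admissible since $\mathbf{L}_m$ is real for $\kappa\in\mathbb R$), I would split each $L^2$-norm into angular and radial parts and apply Lommel's identity $\int_0^1 J_m(k\rho)^2\rho\,d\rho=\frac12[J_m'(k)^2+(1-m^2/k^2)J_m(k)^2]$ to both $\|v_p\|^2$ and $\|w_p\|^2$. The two null-vector relations encoded in $\mathbf{L}_m[\alpha,-\beta]^\top=0$, that is $\alpha J_m'(\kappa)=p\beta J_m'(p\kappa)$ and $\alpha J_m(\kappa)=\beta J_m(p\kappa)$, cause the $J_m'$-contributions to cancel exactly and collapse the remaining terms, leaving the clean expression $I(p)=\frac{c_m}{2}(1-p^2)\,\alpha_p^2\,J_m(\kappa_p)^2$, with $c_m>0$ the angular normalization constant. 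Because $p\neq1$ and $\alpha_p\neq0$, this yields $I(p^\star)=0$ if and only if $J_m(\kappa_{p^\star})=0$. As \cref{lem:bifurc_ord} forces any exceptional point of a differentiable real branch to satisfy $I(p^\star)=0$ (its numerator being nonzero by the previous paragraph), every bifurcation involving a locally real eigenvalue must sit exactly at a Bessel zero; the first null-vector relation then forces $J_m(p^\star\kappa_{p^\star})=0$ as well.

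To determine the order I would Taylor-expand the clean formula about $p^\star$. A Bessel zero is simple, so $J_m'(\kappa_{p^\star})\neq0$ and $J_m(\kappa_p)=J_m'(\kappa_{p^\star})(\kappa_p-\kappa_{p^\star})+\ord{(\kappa_p-\kappa_{p^\star})^2}$, giving $I(p)=(\kappa_p-\kappa_{p^\star})^2\,\overline I(p)$ with $\overline I(p)=\frac{c_m}{2}(1-p^2)\alpha_p^2[J_m'(\kappa_{p^\star})]^2(1+\ord{\kappa_p-\kappa_{p^\star}})$ continuous and nonzero up to $p^\star$; here $\alpha_{p^\star}\neq0$ is read off from the rank-one limit matrix, whose second row gives $\alpha/\beta=pJ_m'(p\kappa)/J_m'(\kappa)$, finite and nonzero. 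Invoking \cref{cor:bifurc_sufficient} with $M-1=2$ identifies a bifurcation of order $M=3$, i.e.\ cubic, in agreement with \cref{rem:angle} (odd order means one locally real branch plus a complex-conjugate pair) and with the three-trajectory picture of \cref{thm:old}. I expect the main obstacle to be precisely this explicit reduction of $I(p)$: performing the Lommel integrals, verifying that the null-vector relations produce the exact cancellation, and then confirming that the prefactor $\overline I$ stays continuous and bounded away from zero on a one-sided closed neighborhood of $p^\star$ under a fixed eigenfunction normalization. The remaining steps are routine applications of \cref{thm:simple,lem:bifurc_ord,cor:bifurc_sufficient}.
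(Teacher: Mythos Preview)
Your proposal is correct and follows essentially the same route as the paper: apply \cref{thm:simple} for local smoothness, verify $\int_D n_{p^\star}'|w_{p^\star}|^2=2p^\star\|w_{p^\star}\|_{L^2}^2\neq0$ so that \cref{thm:bifurc,cor:bifurc,lem:bifurc_ord,cor:bifurc_sufficient} apply, then evaluate the indicator on a locally real branch as $I(p)=\tfrac{a_m\pi}{2}(1-p^2)\alpha_p^2 J_m(\kappa_p)^2$ and invoke \cref{cor:bifurc_sufficient} with $M-1=2$ via the simple Bessel zero $J_m'(\kappa_{p^\star})\neq0$. The only cosmetic differences are that the paper cites \cite[Lemma~2.2]{PieKle24} for the indicator formula (you rederive it via Lommel's identity), uses \cref{lem:nonzero} for $w_{p^\star}\not\equiv0$ (you argue via non-coincidence of zeros of $J_m$ and $J_m'$), and explicitly invokes \cref{rem:angle} to guarantee a locally real branch exists at any real exceptional point.
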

\begin{proof}
    The refractive index $n_p$ depends analytically on $p$ and $n_{p^\star}'\equiv1>0$, so that \cref{thm:bifurc_general} implies all claims except the one on bifurcation order.
    
    For this last claim, we rely on \cref{rem:angle} to conclude that, near a real exceptional point, there must exist at least one eigenvalue trajectory that is purely real for $p$ on a one-sided neighborhood of the exceptional point $p^\star$. We look at the indicator $I$ on such trajectory. As shown in \cite[Lemma 2.2]{PieKle24}, for a real eigenvalue trajectory,
    \begin{equation*}
        I(p)=\frac{a_m\pi}2\left(1-n_p\right)J_m(\kappa_p)^2,
    \end{equation*}
    with $m\in\mathbb{N}_0$ being the Bessel order, cf.~\cref{eq:funDisk}, and $a_m=2$ if $m=0$ and $a_m=1$ otherwise. At (nonzero) roots $p^\star$ of $I$, one must have $J_m(\kappa_{p^\star})=0$ and
    \begin{equation*}
        I(p)=\frac{a_m\pi}2\left(1-n_p\right)J_m'(\kappa_{p^\star})^2(\kappa_p-\kappa_{p^\star})^2+\ord{(\kappa_p-\kappa_{p^\star})^3},
    \end{equation*}
    with $J_m'(\kappa_{p^\star})\neq0$ since nontrivial Bessel zeros are simple. \Cref{cor:bifurc_sufficient} yields the claim.
\end{proof}

In this way, we have recovered all claims from \cite[Corollary 2.3, Lemma 2.4, Lemma 2.6, and Theorem 2.12]{PieKle24} using our novel general results.

\subsection{Numerical tests on the unit disk}\label{sec:disk:num}

To numerically validate our results, we adopt the algorithm introduced in \cite{PraBor24}, designed to approximate solutions of general parametric nonlinear eigenvalue problems. We refer to this algorithm as ``match-based adaptive contour eigensolver'' (MACE). MACE is designed to approximate eigenvalue trajectories over a parameter range $p\in[p_{\min},p_{\max}]\subset\mathbb R$. This is achieved through a collocation approach, by solving the eigenvalue problem at some values of $p$ and then interpolating the (post-processed) eigenvalues to form continuous trajectories. An adaptive loop is employed to more efficiently explore the parameter range. MACE also includes tailor-made strategies for higher-order approximation of bifurcating trajectories. This makes it particularly attractive in our setting.

\begin{figure}[t]
    \centering
    \includegraphics{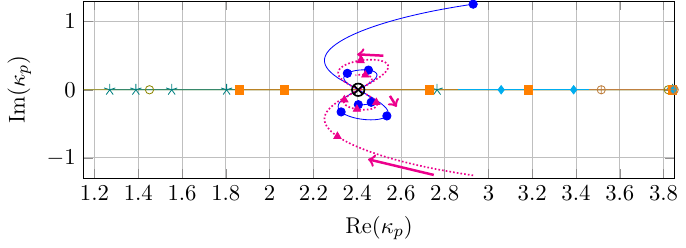}
    \caption{Seven ITE trajectories (five of which are purely real) for the unit disk in the parameter range $p\in[1.01,32]$. Arrows are used to indicate the direction of travel along the dotted magenta curve with triangular markers. Real eigenvalues travel from right to left. The symbol ``$\otimes$'' marks the position of the (Dirichlet) Laplace eigenvalue $\kappa^\star=2.4048\ldots$ (1\textsuperscript{st} zero with Bessel index $m=0$).}
    \label{fig:disk:eigs}
\end{figure}


\begin{figure}[t]
    \centering
    \begin{minipage}[t]{.63\linewidth}
        \vspace{0pt}
        \includegraphics[scale=.9]{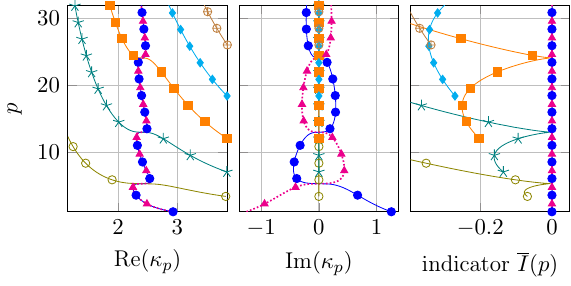}
    \end{minipage}\hspace{2mm}\begin{minipage}[t]{.35\linewidth}
        \vspace{0mm}
        \begin{tabular}{ccc}
             $p^\star$ & $\kappa_{p^\star}$ & rate $\widetilde M$ \\
             \hline
             \hline
             $(5.278)^-$ & 2.393 & 2.774\\
             $(5.278)^+$ & 2.393 & 2.875\\
             $(12.94)^-$ & 2.401 & 2.922\\
             $(12.94)^+$ & 2.401 & 2.909\\
             $(24.04)^-$ & 2.409 & 2.978\\
             $(24.04)^+$ & 2.409 & 3.107
        \end{tabular}
    \end{minipage}
    \caption{Real (left) and imaginary (center) parts of the ITE trajectories for the unit disk. The indicator $I$ is included in the right plot. The table displays the empirical bifurcation order $\widetilde M$ such that $\overline{I}(p)\sim-|\kappa_p-\kappa_{p^\star}|^{\widetilde M-1}$ as $p\to p^\star$ near each bifurcation point $p^\star$, estimated based on the data. The ITE $\kappa_{p^\star}$ differs slightly from the exact $\kappa^\star=2.4048\ldots$ because of interpolation errors.}
    \label{fig:disk:ind}
\end{figure}

In our experiments, we fix a single Bessel order $m$ for simplicity and to avoid clutter in our plots. Since eigenfunctions corresponding to different Bessel orders are orthogonal, one can simply recover the general case by superimposing the single-index MACE results for as many Bessel indices as desired.

MACE requires a non-parametric (nonlinear) eigensolver to gather each ``sample'' at a fixed value of $p$. For this, we employ the Beyn-Hankel method (specifically, Algorithm 2 in \cite{beyn2012integral} with $K=3$ Hankel blocks), a contour-integral method that is particularly suited for smooth eigenproblems like \cref{eq:disk}. As integration contour, we choose a radius-$R$ circle with some specified center, and we discretize the integral through the trapezoidal rule using $N_{\text{quad}}$ uniformly spaced quadrature points. The eigenvalue trajectories are then reconstructed using degree-$D$ B-splines. (The specific simulation parameters are provided below.) Our code is freely available at \cite{codeRepo}.

For our first test, we look at the unit-disk ITP \cref{eq:disk} for $p\in[1.01,32]$. We focus on eigenvalues with Bessel order $m=0$ and we choose a contour of radius $R=1.5$ and center $\kappa=2.5$ for the Beyn-Hankel method. The output of MACE is shown in \cref{fig:disk:eigs}.

For reference, we note that MACE achieves an overall error on the eigenvalue trajectories lower than the tolerance set at $10^{-3}$ by relying on $79$ adaptively selected collocation points. For this, $N_{\text{quad}}=5400$ quadrature points and B-splines of degree $D=7$ are used.

In this and further tests, we index eigenvalue trajectories so that the displayed curves appear smooth. Also, in all our plots, markers along curves are simply visual aids for distinguishing the different trajectories. They do \emph{not} correspond to collocation points or any other numerically relevant object. We also note that, in all our tests, parameter ranges are chosen not because of numerical or algorithmic limitations, but rather so that interesting behavior may be clearly visible in our plots without overloading them with too much information.

The results are in agreement with those of \cite[Section 3.1]{PieKle24} and \cref{prop:disk:bifurc}: cubic bifurcations arise around Bessel zeros, involving two complex-conjugate trajectories and a real one. 
In \cref{fig:disk:ind} (right) we display a simplified version of the indicator $I$ from \cref{eq:bifurc_indicator}, namely,
\begin{equation}\label{eq:ind_simpl}
    \overline{I}(p)=\frac{2}{a_m\pi(1-n_p)}I(p),
\end{equation}
cf.~the proof of \cref{prop:disk:bifurc}. Since the indicator depends on the specific considered eigenpair trajectory, a different curve is shown for each trajectory. We see that non-real eigenvalues have an identically zero indicator, confirming \cref{lem:zeronorm}. On the other hand, the indicator is strictly negative for real eigenvalue curves, except at bifurcation points, in agreement with \cref{cor:bifurc}. Further, as shown in the table in \cref{fig:disk:ind}, $\overline{I}$ (hence $I$) behaves approximately as $\sim-|\kappa_p-\kappa_{p^\star}|^{3-1}$ near bifurcation points, confirming the cubic bifurcation order through \cref{cor:bifurc_sufficient}.

\section{The ITP for the annulus}\label{sec:annulus}
In this section we introduce and study the ITP on the annulus with unit outer radius and inner radius $0<r<1$. Let $D=B_1\setminus\overline{B_r}$ and consider a homogeneous refractive index $n\equiv p\in\mathcal N$, cf.~\cref{eq:indexset}. The eigenfunctions are of the form
\begin{equation}\label{eq:annulus:functions}
\begin{cases}
    v(\rho,\theta)=\left(\alpha J_m(\kappa\rho)+\beta Y_m(\kappa\rho)\right)\phi(m\theta),\\
    w(\rho,\theta)=\left(\gamma J_m(\sqrt{p}\kappa\rho)+\delta Y_m(\sqrt{p}\kappa\rho)\right)\phi(m\theta),
\end{cases}\quad\rho\in[0,1],\theta\in[0,2\pi[,
\end{equation}
for arbitrary $(\alpha,\beta,\gamma,\delta)\neq(0,0,0,0)$, $m\in\mathbb{N}_0$, with $\phi$ denoting either sine (if $m\neq0$) or cosine, and with $Y_m$ being the index-$m$ Bessel function of the second kind. Enforcing the boundary conditions allows one to characterize the spectrum as
{\small%
\begin{equation}\label{eq:annulus}
    \Sigma_{\textup{annulus}(r)}(p)=\bigcup_{m=0}^\infty\left\{\kappa\in\mathbb{C}_{\neq0}:\det\begin{pmatrix}
        J_m(\kappa) & Y_m(\kappa) & J_m(\sqrt{p}\kappa) & Y_m(\sqrt{p}\kappa)\\
        J_m'(\kappa) & Y_m'(\kappa) & \sqrt{p}J_m'(\sqrt{p}\kappa) & \sqrt{p}Y_m'(\sqrt{p}\kappa)\\
        J_m(\kappa r) & Y_m(\kappa r) & J_m(\sqrt{p}\kappa r) & Y_m(\sqrt{p}\kappa r)\\
        J_m'(\kappa r) & Y_m'(\kappa r) & \sqrt{p}J_m'(\sqrt{p}\kappa r) & \sqrt{p}Y_m'(\sqrt{p}\kappa r)\\
    \end{pmatrix}=0\right\}.
\end{equation}}

It is easily seen that our results apply to this problem.

\begin{proposition}\label{prop:annulus:bifurc}
    Consider the ITP for the annulus with fixed inner radius $0<r<1$ and with parametric refractive index $n_p\equiv p$, with $\mathcal P=(p_{\min},p_{\max})\subset\mathbb R_{>0}\setminus\{1\}$, and take any continuous eigenpair trajectory, whose nonzero eigenvalue is uniformly (in $p$) semisimple except at a finite set of values of $p$.
    
    Then the eigenpair trajectory is locally analytic, with the possible exception of the values of $p$ where the eigenvalue ceases to be semisimple. Moreover, \cref{thm:bifurc}, \cref{lem:bifurc_ord}, \cref{cor:bifurc}, and \cref{cor:bifurc_sufficient} apply.
\end{proposition}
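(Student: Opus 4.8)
The plan is to follow the same route as for the disk (\cref{prop:disk:bifurc}), leveraging the explicit separation of variables \cref{eq:annulus:functions} that turns the annular ITP into a family of discrete $4\times4$ nonlinear eigenproblems indexed by the Bessel order $m$. For a fixed $m$, I would assemble the four boundary conditions into the matrix-valued map $\mathbf L(\kappa,p)$ whose determinant appears in \cref{eq:annulus}. Since its entries are the Bessel functions $J_m,Y_m$ and their derivatives evaluated at $\kappa,\,p\kappa,\,\kappa r,$ and $p\kappa r$, the map $(\kappa,p)\mapsto\mathbf L(\kappa,p)$ is holomorphic in $\kappa$ and analytic in $p$ wherever $\kappa\neq0$ and $p>0$ (the arguments of all Bessel functions then being nonzero, so that even the branch-dependent $Y_m$ is locally analytic). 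This is exactly the regularity required to invoke \cref{thm:simple} at every parameter value where the given eigenvalue is simple: there the eigenvalue and the coefficient vector $[\alpha,\beta,\gamma,\delta]^\top$, and hence the reconstructed eigenfunctions $(v_p,w_p)$, depend analytically on $p$, which establishes the claimed local smoothness away from the finite set of non-simple values.

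The crux of the argument is to verify the non-degeneracy condition $\int_Dn_{p^\star}'\lvert w_{p^\star}\rvert^2\neq0$ that is shared by \cref{thm:bifurc,cor:bifurc,lem:bifurc_ord,cor:bifurc_sufficient}. Because $n_p\equiv p^2$ gives $n_{p^\star}'=2p^\star>0$, this collapses to proving that $w_{p^\star}\not\equiv0$ for any nontrivial eigenpair. I expect this to be the main obstacle, and I would resolve it with a unique-continuation argument that is independent of the geometry: if $w\equiv0$, the transmission conditions in \cref{eq:ITP} force $v=\partial_\nu v=0$ on $\partial D$, so $v$ solves the Helmholtz equation $\Delta v+\kappa^2v=0$ with vanishing Cauchy data on the smooth boundary. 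Holmgren's theorem (constant, hence analytic, coefficients; elliptic, hence non-characteristic boundary) then forces $v\equiv0$ on the connected domain $D$, contradicting $(v,w)\neq(0,0)$. Consequently $\int_D\lvert w_{p^\star}\rvert^2>0$ and the non-degeneracy holds automatically for every eigenpair trajectory. I note that this step is where the annular geometry differs least from the disk: the two boundary components $\partial B_1$ and $\partial B_r$ both carry zero Cauchy data, and the admissibility of $Y_m$ (excluded on the disk) plays no role here.

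With analyticity of $p\mapsto n_p$ and the non-degeneracy established, the remaining assertions reduce to checking that the hypotheses of the cited results hold, which the above accomplishes: \cref{thm:simple} gives local smoothness at simple eigenvalues; \cref{thm:bifurc} and \cref{cor:bifurc} apply whenever a trajectory touches the real axis coming from the complex plane (respectively whenever the $L^2$-balance $\int_D\lvert v_{p^\star}\rvert^2=\int_Dn_{p^\star}\lvert w_{p^\star}\rvert^2$ holds); and \cref{lem:bifurc_ord} together with \cref{cor:bifurc_sufficient} govern the derivative \cref{eq:bifurc_ord} and the bifurcation orders through the indicator $I$. Unlike the disk, no sharper Bessel-zero or cubic-bifurcation statement is claimed here, so I would not need the finer analysis of the characteristic equation \cref{eq:annulus}; it suffices that the structural hypotheses of each general result are met, which completes the plan.
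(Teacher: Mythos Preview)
Your proposal is correct and mirrors the paper's own argument: the paper's proof of \cref{prop:annulus:bifurc} simply says to repeat the disk proof (\cref{prop:disk:bifurc}) minus the bifurcation-order analysis, which is precisely what you do. Your unique-continuation step to ensure $w_{p^\star}\not\equiv0$ is exactly the content of \cref{lem:nonzero}, which the paper invokes in the disk case; citing that lemma would shorten your argument but is not a substantive difference.
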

\begin{proof}
    The proof of \cref{prop:disk:bifurc}, with the obvious exception of the part pertaining to the bifurcation order, can be trivially generalized to this case.
\end{proof}

Since eigenfunction expressions are available, we can explicitly compute the indicator $I$.

\begin{proposition}\label{prop:annulus:bifurc_ord}
    Consider the setup in \cref{prop:annulus:bifurc}. Assume that the eigenpair trajectory corresponds to a Bessel index $m\in\mathbb{N}_0$, i.e., the eigenfunction trajectories are as in \cref{eq:annulus:functions}, with $\alpha$, $\beta$, $\gamma$, $\delta$, and $\kappa$ being (by \cref{thm:simple}) continuous functions of $p$. The simplified indicator \cref{eq:ind_simpl} equals $\overline{I}(p)=\|v_p\|_{L^\infty(\partial B_1;\mathbb C)}^2-r^2\|v_p\|_{L^\infty(\partial B_r;\mathbb C)}^2$.
\end{proposition}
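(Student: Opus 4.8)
The plan is to evaluate the volume integrals defining $I(p)=\int_D\left(|v_p|^2-p^2|w_p|^2\right)$ explicitly, exploiting the separation of variables in \cref{eq:annulus:functions}. Writing $v_p=R_v(\rho)\phi(m\theta)$ and $w_p=R_w(\rho)\phi(m\theta)$ with radial parts $R_v(\rho)=\alpha J_m(\kappa\rho)+\beta Y_m(\kappa\rho)$ and $R_w(\rho)=\gamma J_m(p\kappa\rho)+\delta Y_m(p\kappa\rho)$, the angular factor integrates to the constant $\int_0^{2\pi}\phi(m\theta)^2\,d\theta=a_m\pi$ (with $a_0=2$ and $a_m=1$ for $m\geq1$), which is exactly the constant appearing in the simplified indicator \cref{eq:ind_simpl}. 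This reduces each volume integral to a one-dimensional radial integral of the form $\int_r^1\rho\,R(\rho)^2\,d\rho$. First I would restrict to real eigenvalue trajectories, where $\kappa$ and the coefficients $\alpha,\beta,\gamma,\delta$ may be chosen real so that $|v_p|^2=R_v^2\,\phi^2$; the non-real case is addressed at the end.

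Next I would apply the Lommel integral formula for a cylinder function $\mathcal{C}_m$, namely $\int t\,\mathcal{C}_m(t)^2\,dt=\frac{1}{2}\left[t^2\mathcal{C}_m'(t)^2+(t^2-m^2)\mathcal{C}_m(t)^2\right]$, after the substitutions $t=\kappa\rho$ (for $v_p$) and $t=p\kappa\rho$ (for $w_p$). Because the $p^2$ weight in $p^2\int_D|w_p|^2$ combines with the Jacobian factor $1/(p\kappa)^2$ from the substitution to produce $1/\kappa^2$, both $\int_D|v_p|^2$ and $p^2\int_D|w_p|^2$ come out as the \emph{same} prefactor $\frac{a_m\pi}{2\kappa^2}$ multiplying a bracket evaluated at the two radii $\rho=1$ and $\rho=r$. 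These brackets involve only the boundary data $R_v(1),R_v'(1),R_v(r),R_v'(r)$ and their $w$-counterparts.

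The decisive step is to insert the transmission conditions of \cref{eq:ITP}. On both $\partial B_1$ and $\partial B_r$, continuity of the Cauchy data $v_p=w_p$ and $\partial_\nu v_p=\partial_\nu w_p$ (the inward normal sign on $\partial B_r$ cancelling) reads $R_v=R_w$ and $R_v'=R_w'$ at $\rho\in\{1,r\}$. Rewriting the derivatives in terms of the Bessel argument, $R_w'(\rho)=p\kappa\,\mathcal{D}_m'(p\kappa\rho)$ where $\mathcal{D}_m$ is the cylinder function for $w_p$, turns the condition $R_v'=R_w'$ into an equality that makes the $w$-bracket's derivative terms $(p\kappa)^2\mathcal{D}_m'(\cdot)^2$ coincide with the $v$-bracket's terms $\kappa^2\mathcal{C}_m'(\cdot)^2$, while $R_v=R_w$ identifies the boundary values. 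I expect the only real obstacle to be careful bookkeeping: upon forming the difference $\int_D|v_p|^2-p^2\int_D|w_p|^2$, one must verify that both the derivative terms and the $m^2$ terms cancel identically at each radius, leaving exactly $\frac{a_m\pi(1-p^2)}{2}\left(R_v(1)^2-r^2R_v(r)^2\right)$, since the surviving coefficients of $R_v(1)^2$ and $R_v(r)^2$ reduce to $\kappa^2(1-p^2)$ and $-\kappa^2 r^2(1-p^2)$, respectively.

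Dividing by the prefactor $\frac{a_m\pi(1-p^2)}{2}$ (legitimate as $p\neq1$ and $\kappa\neq0$) and matching with \cref{eq:ind_simpl} yields $\overline{I}(p)=R_v(1)^2-r^2R_v(r)^2$. Finally, since $\max_\theta|\phi(m\theta)|=1$, the trace of $v_p$ on the circle $\partial B_\rho$ has sup-norm $|R_v(\rho)|$, so $R_v(1)^2=\|v_p\|_{L^\infty(\partial B_1)}^2$ and $R_v(r)^2=\|v_p\|_{L^\infty(\partial B_r)}^2$, which is the claim. For non-real eigenvalue trajectories one has $I\equiv0$ by \cref{lem:zeronorm}; repeating the computation with $\overline{R_v}$ through the cross-product Lommel identity (with arguments $\kappa$ and $\overline{\kappa}$) is consistent with this, so the stated identity is naturally understood along the real trajectories, where $\overline{I}$ is what governs the bifurcation analysis.
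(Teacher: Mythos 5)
Your proposal is correct and follows essentially the same route as the paper's proof in \cref{sec:proof:annulus}: separation of variables reducing to radial integrals with angular constant $a_m\pi$, the Lommel-type antiderivative \cref{eq:integral}/\cref{eq:integral_sum} evaluated at $\rho\in\{r,1\}$, and cancellation of the derivative and $m^2$ terms via the transmission conditions, leaving $\frac{a_m\pi}{2}(1-p^2)\left(|R_v(1)|^2-r^2|R_v(r)|^2\right)$. The only (cosmetic) difference is that you invoke the classical Lommel formula for a real cylinder function after reducing to real coefficients and dispatch non-real trajectories via \cref{lem:zeronorm}, whereas the paper keeps complex coefficients throughout by proving the mixed-product identity for $Z,T\in\{J,Y\}$ and expanding the modulus.
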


\begin{proof}
    The proof can be found in \cref{app:bessel}.
\end{proof}

An immediate consequence is that, by \cref{cor:bifurc_sufficient}, a real (piecewise-smooth) eigenvalue trajectory undergoes bifurcation if and only if $\|v_p\|_{L^\infty(\partial B_1;\mathbb C)}=r\|v_p\|_{L^\infty(\partial B_r;\mathbb C)}$. By the assumed polar-coordinate separability in \cref{eq:annulus:functions}, this is actually equivalent to $\|v_p\|_{L^2(\partial B_1;\mathbb C)}=\|v_p\|_{L^2(\partial B_r;\mathbb C)}$.

\begin{remark}\label{rem:annulus:ord}
    Our numerical tests below indicate that bifurcations occur only for $m\neq0$, suggesting that the condition $\overline{I}(p)=0$ is impossible for any real eigenvalue curve whose Bessel index is $m=0$. Moreover, our experiments for $m\neq0$ display bifurcations only of the lowest nontrivial order $2$, suggesting that any such root of $\overline{I}$ must be of order $1/2$, by \cref{cor:bifurc_sufficient}. At the same time, we believe that higher-order bifurcations may arise in special circumstances, namely, for certain choices of $r$. However, further work is needed to confirm these statements analytically, due to the presence of the eigenfunction trajectory $v_p$, which, available only implicitly through a nonlinear eigenvalue problem, is difficult to tame.
\end{remark}

Note that, if an ITE trajectory undergoes a quadratic bifurcation on the real axis at $p=p^\star$, the trajectory must necessarily remain on the real axis over one-sided neighborhoods of $p^\star$ and leave the real axis on the opposite side of $p^\star$, cf.~\cref{rem:angle}. As discussed in \cref{rem:annulus:ord}, we have empirical evidence that this may happen for complex-conjugate eigenvalue trajectories of the ITP on the annulus. This is in contrast with the behavior displayed by the ITP on the unit disk, where non-real eigenvalue trajectories become real only ``momentarily'', at single points.

\subsection{Numerical tests on the annulus}\label{sec:annulus:num}

Here, we numerically explore the ITP on the annulus, using the same setup as in the previous section (for the unit disk). As before, our code is freely available at \cite{codeRepo}.

\subsubsection{Test with Bessel index zero}\label{sec:annulus:zero}
We look at the ITP \cref{eq:annulus} for $p\in[6,64]$ and $r=0.1$. Note the increased parameter range\footnote{We also restrict $p\geq6$ since no eigenvalues are present within the Beyn contour for $p<6$. We ignore the region $p\in[1,6]$ since an empty spectrum (i) has little numerical significance and (ii) causes numerical instabilities in our implementation of the Beyn-Hankel method. Admittedly, the latter could be easily avoided with an \emph{ad hoc} patch.} with respect to the unit disk, which also behooves us to choose a larger number of quadrature points $N_{\text{quad}}=8100$. With these changes, we aim attempt to showcase more interesting behavior in the eigenvalue trajectories.

We first focus on eigenvalues with Bessel order $m=0$, within a distance $R=1.5$ from $\kappa=3$. The output of MACE is shown in \cref{fig:annulus:eigs} (left). For reference, we note that MACE achieves a uniform error of $10^{-3}$ on the eigenvalue trajectories, using $103$ adaptively selected collocation points.

No bifurcations are observed. Instead, two complex-conjugate (non-real) eigenvalue trajectories approach the real axis as $p$ increases. This behavior is similar to that reported in \cite[Sections 3.4--3.9]{PieKle24} for other, non-ball-shaped domains. In particular, we recall that non-real eigenvalue trajectories must converge to (Dirichlet) Laplace eigenvalues of $D$ as $p\to\infty$ \cite[Theorem 2.8]{PieKle24}. In this case, the two complex-conjugate trajectories are expected to converge to $\kappa^\star$.

The plot of the real and imaginary parts of the eigenvalues in \cref{fig:annulus:ind} (left and center) provides more information. We observe that groups of four eigenvalue curves get involved in what we may describe as an ``almost-exceptional'' point. The two non-real trajectories (blue circles and magenta triangles) have rather large derivatives at the point of each of their ``orbits'' that brings them closest to the real axis. Meanwhile, two purely real trajectories (orange squares and light-blue diamonds) also exhibit very large derivatives and nearly intersect. Notably, all four above-mentioned trajectories have locally similar real parts.

In \cref{fig:annulus:ind} (right) we display the indicator $\overline{I}$ from \cref{eq:bifurc_indicator,eq:ind_simpl}, with a different curve for each trajectory. By \cref{lem:zeronorm}, only purely real trajectories yield nonzero values of $\overline{I}$. In agreement with \cref{lem:bifurc_ord}, large derivatives along real eigenvalue trajectories correspond to small values of $\overline{I}$.

Furthermore, the indicators exhibit abrupt changes in behavior near ``almost-crossing'' points. For instance, near its local maximum, the orange line with square markers has a very large derivative for $p<p^\star$ but a much smaller one for $p>p^\star$, with $p^\star$ being the corresponding ``almost-crossing'' parameter value. The light-blue line with diamond markers has a complementary behavior. This phenomenon may be categorized as ``mode veering'' in the terminology of \cite[Section 5.3]{Amsallem2011}: although the two trajectories do not intersect, a suitable perturbation of the problem would lead to intersecting trajectories.

\begin{figure}[t]
    \centering
    \includegraphics{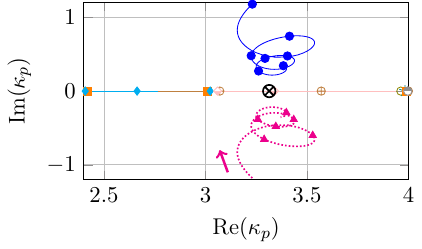}~%
    \includegraphics{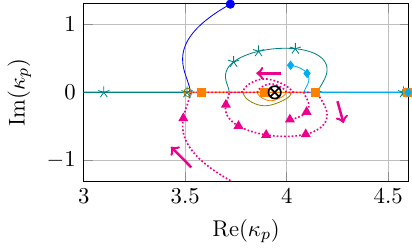}
    \caption{ITE trajectories for the annulus with $r=0.1$. Left: 10 ITEs for Bessel index $m=0$ and parameter range $p\in[6,64]$. Right: 6 ITEs for Bessel index $m=1$ and parameter range $p\in[4,25]$. The arrow is used to indicate the direction of travel along the dotted magenta curve with triangular markers. The symbols ``$\otimes$'' mark the position of the (Dirichlet) Laplace eigenvalues $\kappa^\star=3.3139\ldots$ (left, 1\textsuperscript{st} zero with Bessel index $m=0$) and $\kappa^\star=3.9409\ldots$ (right, 1\textsuperscript{st} zero with Bessel index $m=1$).}
    \label{fig:annulus:eigs}
\end{figure}
\begin{figure}[t]
    \centering
    \includegraphics{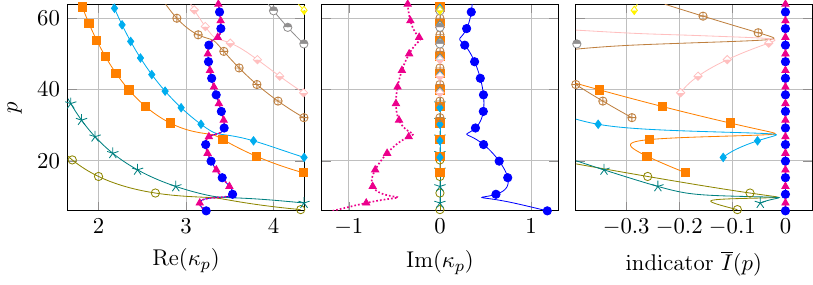}
    \caption{Real (left) and imaginary (center) parts of the ITE trajectories for the annulus with $m=0$ and $r=0.1$. The indicator $\overline{I}$ is included in the right plot.}
    \label{fig:annulus:ind}
\end{figure}

\subsubsection{Test with Bessel index one}\label{sec:annulus:one}
A careful theoretical analysis (part of a separate manuscript currently in preparation) shows that Bessel index $m=0$ is somewhat special for the ITP on the annulus. Intuitively, this is because $J_0$ is the only Bessel function of the first kind that is nonzero at the origin. Following this insight, we are behooved to repeat our numerical test for a different Bessel index $m=1$, shifting Beyn's contour to the right ($\kappa=3.5$) and focusing on the parameter range $p\in[4,25]$, in order to hone in on interesting spectral behavior. The results, shown in \cref{fig:annulus:eigs} (right), display a behavior that is different from the case $m=0$. 

\begin{figure}[t]
    \centering
    \begin{minipage}[t]{.65\linewidth}
        \vspace{0pt}
        \includegraphics[scale=.9]{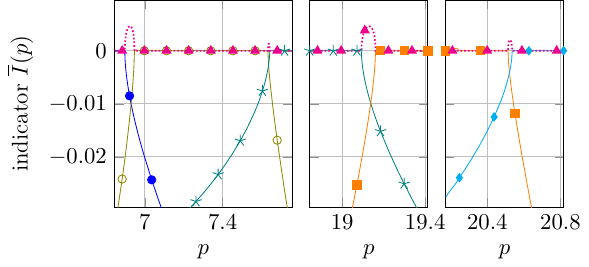}
    \end{minipage}\hspace{2mm}\begin{minipage}[t]{.33\linewidth}
        \vspace{0mm}
        \begin{small}
        \begin{tabular}{ccc}
             $p^\star$ & $\kappa_{p^\star}$ & rate $\widetilde M$ \\
             \hline
             \hline
             $(6.897)^+$ & 3.789 & 2.110\\
             $(6.946)^-$ & 3.525 & 1.901\\
             $(7.633)^+$ & 4.025 & 2.191\\
             $(7.641)^-$ & 4.152 & 2.284\\
             $(19.09)^+$ & 3.718 & 2.271\\
             $(19.16)^-$ & 3.855 & 2.290\\
             $(20.51)^+$ & 3.997 & 2.153\\
             $(20.54)^-$ & 4.086 & 2.154
        \end{tabular}
        \end{small}
    \end{minipage}
    \caption{Indicator $\overline{I}$ near eight bifurcations for the ITE trajectories for the annulus with $m=1$ and $r=0.1$. The table displays the empirical bifurcation order $\widetilde M$ such that $\overline{I}(p)\sim\pm|\kappa_p-\kappa_{p^\star}|^{\widetilde M-1}$ as $p\to p^\star$ near each bifurcation point $p^\star$, estimated based on the data.}
    \label{fig:annulus:ind2zoom}
\end{figure}

We observe eight quadratic bifurcations that turn pairs of complex-conjugate eigenvalues into pairs of real eigenvalues or vice versa. Note, in particular, how the trajectories of the eigenvalues form orbits that approach a (Dirichlet) Laplace eigenvalue $\kappa^\star$ as $p$ increases. This agrees with \cite[Theorem 2.8]{PieKle24}, which prescribes convergence of non-real trajectories to (Dirichlet) Laplace eigenvalues of $D$ as $p\to\infty$.

We show real and imaginary parts of the eigenvalues in \cref{fig:annulus:ind2} (top left and top center). The eigenvalue trajectories have extremely large derivatives near bifurcations. Notably, after complex-conjugate eigenvalues turn real, one of the ensuing real trajectories (the dotted magenta line with triangular markers) has a large \emph{positive} derivative with respect to the refractive index, which is uncharacteristic in ITPs in our experience. This trajectory travels in the ``wrong direction'' for a short time, before colliding with another real trajectory and becoming complex conjugate once again.

In \cref{fig:annulus:ind2} (top right) we display the indicator $\overline{I}$ from \cref{eq:bifurc_indicator}, with a different curve for each trajectory. As in the previous examples, only purely real trajectories yield nonzero values of $\overline{I}$. As predicted by \cref{lem:bifurc_ord}, large derivatives in real eigenvalue trajectories correspond to small values of $\overline{I}$. 
The zoomed plots in \cref{fig:annulus:ind2zoom} show that the indicator becomes positive if and only if an eigenvalue trajectory is (i) real and (ii) traveling to the right. Theory confirms this, since \cref{cor:bifurc} relates the sign of $\overline{I}$ to the sign of the derivative of a real trajectory.

\begin{figure}[t]
    \centering
    \includegraphics{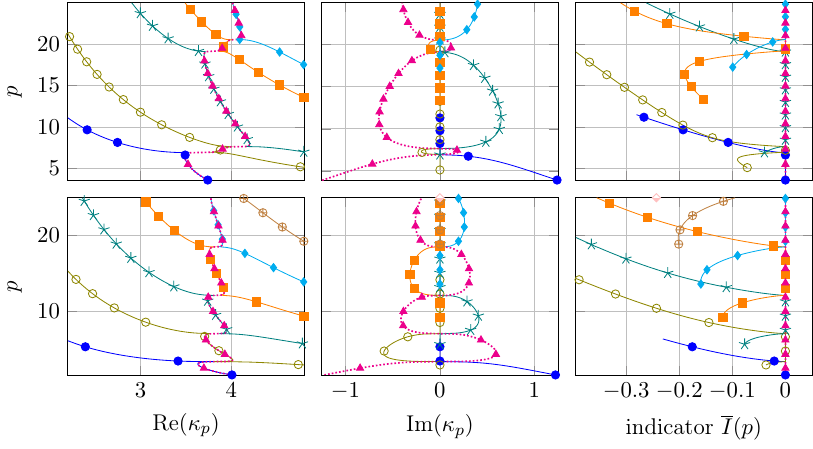}
    \caption{Real (left column) and imaginary (center column) parts of ITE trajectories, with indicator $\overline{I}$ in the right column. Top row: annulus with Bessel index $m=1$ and $r=0.1$. Bottom row: unit disk with Bessel index $m=1$.}
    \label{fig:annulus:ind2}
\end{figure}


Regarding the asymptotic behavior of $\overline{I}$ as $p\to p^\star$, in the table in \cref{fig:annulus:ind} we can observe an empirical scaling close to $\sim\pm|\kappa_p-\kappa_{p^\star}|^{2-1}$ near bifurcation points, confirming the quadratic bifurcation order by \cref{cor:bifurc_sufficient}.

Finally, we empirically compare the spectra of disk and annulus in \cref{fig:annulus:ind2} (bottom plots), where we show the results obtained by repeating our last experiment with the same setup, including the Bessel index $m=1$, but with the unit disk replacing the annulus. In the plots, we have indexed the eigenvalue trajectories in such a way as to mimic the labeling of the annulus ITEs, rather than to obtain smooth eigenvalue curves, cf.~\cref{fig:disk:ind}. We can observe a very similar structure to that in \cref{fig:annulus:eigs} (right), although now all bifurcations are cubic and the indicator $\overline{I}$ is uniformly non-positive. This seems to suggest some type of convergence of the annulus ITEs to the disk's as $r\searrow0$. As part of our ongoing work, we are theoretically investigating in which sense this convergence happens, as a simple example of ITP spectral variations under geometric perturbation.


\section{The inhomogeneous ITP}\label{sec:fem}
The previous two sections have considered spherically stratified media, for which the theory of complex ITEs is well developed. In cases where the ITP involves non-radially symmetric or non-homogeneous media, theoretical results are more limited. In fact, there is no theoretical guarantee of the existence or discreteness of non-real eigenvalues \cite{bookcakoni2022}.

Our results from \cref{sec:paraITP}, notably \cref{thm:bifurc_general}, apply also in the non-radially symmetric and inhomogeneous case, provided the parametrization of the refractive index satisfies the required regularity assumptions. 
Moreover, our computational setup makes even such problems tractable through MACE, provided a strategy for discretizing the ITP \cref{eq:ITPweak} is available. We show an example of this next.

Given the 2-dimensional unit disk $D=B_1$, we consider the inhomogeneous refractive index
\begin{equation}\label{eq:inhomogeneous}
    n_p(x,y)=p\left(1-e^{-((x-0.5)^2+y^2)/(2\sigma^2)}\right)+1,
\end{equation}
i.e., a smooth Gaussian well ranging from a minimum value of $n_p(0.5,0)=1$ to a maximum of $n_p(-1,0)\simeq p+1$. See \cref{fig:fem:reim} (left). We fix $\sigma=0.05$, a parameter range $p\in[4,9]$, and we focus on eigenvalues within a circular contour with radius 1 and center $\kappa=5$. Note that $n_p'(x,y)=1-e^{-((x-0.5)^2+y^2)/(2\sigma^2)}>0$ on $D\setminus\{(0.5,0)\}$, so that the assumptions of \cref{thm:bifurc_general} are satisfied.

We use the finite-element (FE) method to discretize the ITP, due to its simple implementation and generality, although alternatives are available \cite{bookcakoni2022}. A main difference with respect to our previous tests is that the FE ITP discretization is a linear, non-Hermitian eigenvalue problem in $\kappa^2$, whose size depends on the resolution of the mesh used to discretize $D$. Specifically, we solve the following discrete formulation, easily derived from \cref{eq:ITPweak} \cite{Ji2013}:
\begin{equation*}
    \begin{pmatrix}
        & K_{:0}\\
        K_{:0}^\top & K_{00}
    \end{pmatrix}
    \begin{bmatrix}
        \mathbf{v}\\
        (\mathbf{w}-\mathbf{v})_0
    \end{bmatrix}=\kappa^2
    \begin{pmatrix}
        M_{n-1} & M_{n,:0}\\
        M_{n,:0}^\top & M_{n,00}
    \end{pmatrix}
    \begin{bmatrix}
        \mathbf{v}\\
        (\mathbf{w}-\mathbf{v})_0
    \end{bmatrix},
\end{equation*}
where the vectors $\mathbf{v}$ and $\mathbf{w}$ contain (complex) nodal values of the eigenfunctions $v$ and $w$, respectively, with a subscript ``$0$'' denoting removal of boundary degrees of freedom. On the other hand, $K$ and $M_\omega$ denote stiffness and mass matrices, the latter weighted by $\omega\in\{n-1,n\}$, with subscripts ``$:\hspace{-.4em}0$'' and ``$00$'' denoting removal of boundary degrees of freedom from trial functions only and from both trial and test functions, respectively. In our tests, we use a uniform mesh with resolution $h\simeq2\cdot 10^{-2}$, resulting in a discrete problem of size $7.4\cdot 10^3$.

\begin{figure}[t]
    \centering

    \begin{minipage}[t]{.325\textwidth}
        \vspace{2mm}
        \includegraphics[scale=.9]{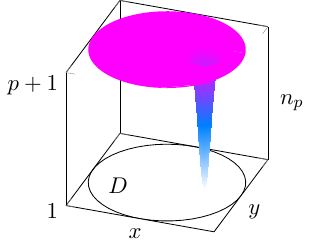}
    \end{minipage}\hspace{1mm}%
    \begin{minipage}[t]{.65\textwidth}
        \vspace{0cm}
        \includegraphics{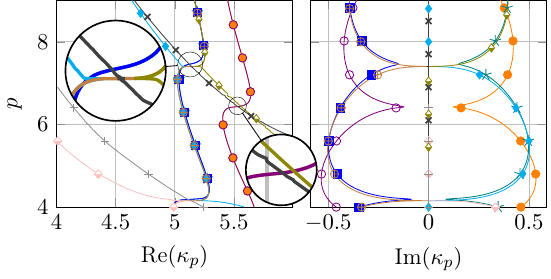}
    \end{minipage}
    \caption{Left: inhomogeneous refractive index $n_p$ from \cref{eq:inhomogeneous}. Center and right: real (center) and imaginary (right) parts of corresponding ITE trajectories (after FE discretization). Only trajectories that are non-real for some values of $p$ are plotted: 49 purely real eigenvalue trajectories are not included in this plot. Zoomed views highlight bifurcating (top left, $|\kappa_p'|\to\infty$) and almost-bifurcating (bottom right, $|\kappa_p'|\gg1$ but finite) behavior.}
    \label{fig:fem:reim}
\end{figure}

\begin{figure}[t]
    \centering

    \begin{minipage}[t]{.55\textwidth}
        \vspace{0cm}
        \includegraphics{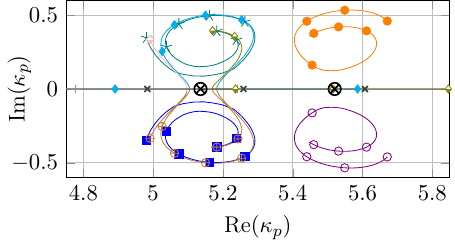}
    \end{minipage}\hspace{-2mm}%
    \begin{minipage}[t]{.4\textwidth}
        \vspace{0cm}
        \includegraphics{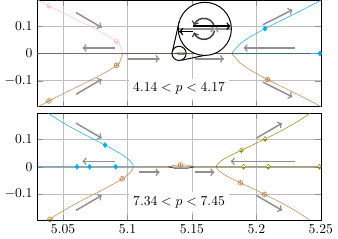}
    \end{minipage}
    \caption{Left: ITE trajectories for the (FE discretization of the) inhomogeneous ITP with \cref{eq:inhomogeneous} for the parameter range $p\in[4,9]$. The direction of travel along all non-real curves is as in \cref{fig:annulus:eigs}. 49 purely real eigenvalue trajectories are present but not visible in this plot. The symbols ``$\otimes$'' mark the positions of (Dirichlet) Laplace eigenvalues $\kappa_1^\star=5.1356\ldots$ (1\textsuperscript{st} zero with Bessel index $m=2$) and $\kappa_2^\star=5.5200\ldots$ (3\textsuperscript{rd} zero with Bessel index $m=0$). Right: zoomed views on the four major and four minor quadratic bifurcation.}
    \label{fig:fem:eigs}
\end{figure}

We use the same MACE setup as in \cref{sec:disk:num,sec:annulus:num}. The algorithm terminates after automatically identifying 55 ITE trajectories, out of which 49 are purely real. We show the results in \cref{fig:fem:reim,fig:fem:eigs}, where we still clearly see bifurcating behavior: there are four ``major'' quadratic bifurcations, visible near $\kappa=5.13\ldots$ in \cref{fig:fem:eigs} (left) and four more ``minor'' quadratic bifurcations, visible only by zooming in, cf.~\cref{fig:fem:eigs} (right). Moreover, two uniformly non-real trajectories give rise to an ``almost-exceptional'' point near $\kappa=5.52\ldots$, similar to that described in \cref{sec:annulus:zero}. As predicted by \cite[Theorem 2.8]{PieKle24}, non-real ITEs tend to be located near (Dirichlet) Laplace eigenvalues of $D$.

\begin{figure}[t]
    \centering
    \includegraphics{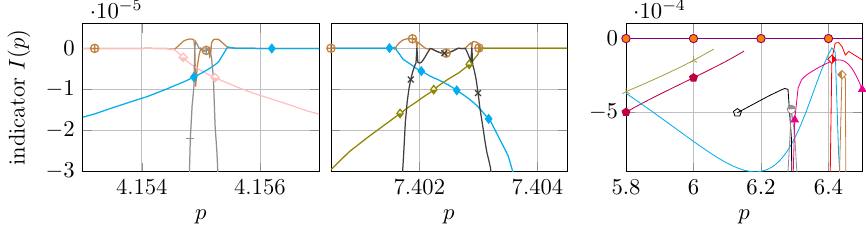}
    \caption{Trajectory indicator $I$ for the inhomogeneous ITP with \cref{eq:inhomogeneous}. Zoomed views near the first four bifurcations (left), near the last four bifurcations (center), and in a region containing 
    no bifurcations (right). Colors and markers are the same as in \cref{fig:fem:reim,fig:fem:eigs}. In particular, all nonzero curves in the right plot are purely real ITE trajectories, not visible in the previous plots.}
    \label{fig:fem:ind}
\end{figure}

After running MACE, we also evaluate the indicator $I$ over each ITE trajectory using definition \cref{eq:bifurc_indicator}, through (weighted) mass matrices and the discrete eigenvectors $\mathbf{v}$ and $\mathbf{w}$. This allows us to confirm whether the indicator $I$ can predict bifurcating behavior also in this inhomogeneous case. We display our results in \cref{fig:fem:ind}, where, to avoid cluttering, we restrict our focus to some specific parameter windows and on subsets of the 55 ITE trajectories. All non-real (branches of) ITE trajectories satisfy $I\equiv0$ because of \cref{lem:zeronorm}. Real ITE trajectories involved in exceptional points yield indicators $I$ that vanish exactly at the bifurcation points, as predicted by \cref{cor:bifurc}. It is again striking to see how sensitive the ITE trajectories are near bifurcations, as showcased also by the horizontal scale of the left and middle plots.

These results are not too dissimilar from those on the annulus, presented in \cref{sec:annulus:one}. In both cases, the indicator $I$ is positive for a few narrow parameter windows, corresponding to trajectories briefly moving towards the positive real axis after leaving a bifurcation point. The singular behavior here is even more complicated due to the ``minor bifurcations'', which, however, can still be well identified through the indicator $I$.

In \cref{fig:fem:ind} (right), we also show the indicator $I$ over a parameter window that does not contain any bifurcations. We do this for three reasons:
\begin{itemize}
    \item The vertical scale of this plot differs by the other two by more than an order of magnitude. This provides evidence that $I$ has a fairly large (relatively speaking) magnitude when ITE trajectories are smooth. This allows identifying exceptional points reliably.
    \item Near $p=6.4$, we see two indicator trajectories almost vanish. Similarly to the behavior observed in \cref{sec:annulus:zero}, these correspond to the almost-exceptional point involving the two uniformly non-real trajectories.
    \item We also see two indicator trajectories that seem to be heading towards zero but then disappear near $p=6.1$. These correspond to two real eigenvalue trajectories that are about to bifurcate \emph{outside the integration contour}, near the (Dirichlet) Laplace eigenvalue $\kappa^\star=3.8317\ldots<4$. The corresponding bifurcation is invisible to MACE since it is filtered out by the contour integral. Nevertheless, thanks to the indicator $I$, we can still infer its presence by its effect on the ITE trajectories within the contour.
\end{itemize}

Finally, we wish to comment on the numerical robustness of the indicator $I$. Our results show that $I$ can reliably detect spectral singularities even in large-scale (discretizations of) ITPs, although evaluating $I$ requires the ITP eigenvectors, whose computation may be unstable and inaccurate due to bifurcations. In our tests, we found some evidence of such instabilities near some of the ``minor bifurcations'', cf.~\cref{fig:fem:ind} (left and center). While future work is needed to fully analyze the potential impact of these effects (and its dependence on the ITP discretization resolution), it is crucial to note that (at least in our tests) their magnitudes are well below the MACE tolerance. As such, they do not seem to affect the predictive performance of the indicator. However, we must mention that, due to the noise in $I$, in this test we could not use a data-driven asymptotic regression $I(p)\sim|\kappa_p-\kappa_{p^\star}|^{M-1}$ to estimate the bifurcation order $M$, as done in \cref{fig:disk:ind} (right) and \cref{fig:annulus:ind2zoom} (right) in the radially symmetric cases. As a final comment regarding the use of indicator $I$ under numerical noise, we believe that any detected instability in evaluating $I$ may actually be used as supporting evidence in favor of the local presence of singularities.

\section{Conclusions and outlook}\label{sec:conclusions}
In this work, we have presented a theoretical and computational study of the parameter dependence of interior transmission eigenvalues and eigenfunctions, focusing on their smoothness with respect to variations in the refractive index. Our general results advance the understanding of ITP spectra and provide simple, practical criteria to verify the local smoothness of real ITP eigenpair trajectories. These criteria enable a quick diagnostic for identifying bifurcations and other exceptional points without resorting to costly, exhaustive numerical continuation.

By specializing our analysis to radially symmetric configurations, we have clarified the mechanisms leading to bifurcations and shown that formulating the ITP as a discrete, parametric eigenvalue problem enables the use of efficient contour-integral-based solvers. In particular, the MACE strategy proved to be a robust and accurate method for tracking eigenvalue trajectories, even through non-smooth regimes, validating our theoretical findings. In addition, our simulations were able to uncover completely novel bifurcating behavior for both non-simply connected domains and inhomogeneous media. Further efforts should be directed toward determining whether such non-smooth effects, observed in the FE-discrete version of the ITP, also arise at the continuous level.

As part of ongoing work, we are investigating extensions of our analysis to study the dependence of the ITP spectrum of the annulus on both the refractive index and the inner radius $r$. 
This setting also serves as a model problem for understanding more general, multi-parameter ITPs, such as the \emph{anisotropic ITP} (aITP), where parameters appear in the symmetric positive-definite matrix used to model direction-dependent properties of the underlying medium \cite{bookcakoni2022}. 
Under radial symmetry, the corresponding aITP can still be cast in low-dimensional form using Bessel functions, although surprisingly new spectral behavior arises \cite{lukas2020,kleefeld_computing_2018}. From a computational perspective, multi-parameter problems such as the aITP remain challenging, since most existing solvers (including MACE) are designed for a single parameter only.


\appendix

\section{Proof of annulus indicator formula}\label{app:bessel}
The proof of \cref{prop:annulus:bifurc_ord} requires the following technical lemma.
\begin{lemma}\label{lem:bessel_integral}
    Given $m\in\mathbb{N}_0$, $a,b>0$ and $c,d\in\mathbb{C}$, one has
    \begin{multline}\label{eq:integral_sum}
        \int_a^bx\left|cJ_m(x)+dY_m(x)\right|^2\textup{d}x=\\
        =\left[\frac{x^2}2\left(\left|cJ_m'(x)+dY_m'(x)\right|^2+\left(1-\frac{m^2}{x^2}\right)\left|cJ_m(x)+dY_m(x)\right|^2\right)\right]_{x=a}^{x=b}.
    \end{multline}
\end{lemma}
\begin{proof}
    A proof for $d=0$ can be found, e.g., in \cite[Section 5.14]{Le70}. We prove here the general case since we could not find it elsewhere.

    Let $Z,T\in\{J_m,Y_m\}$ and $z\in\mathbb{C}_{\neq0}$. We first show that
    \begin{equation}\label{eq:integral}
        \int zZ(z)T(z)\textup{d}z=\frac{z^2}2\left(Z'(z)T'(z)+\left(1-\frac{m^2}{z^2}\right)Z(z)T(z)\right)+c.
    \end{equation}
    To this aim, we differentiate the above right-hand side and use Bessel's equation \cite[eq.~9.1.1]{abramowitz1964handbook}
    \begin{equation*}
        \zeta''(z)=\left(\frac{m^2}{z^2}-1\right)\zeta(z)-\frac1z\zeta'(z),
    \end{equation*}
    which applies to $\zeta\in\{Z,T\}$. Omitting the argument $(z)$ for conciseness, we obtain
    {\small%
    \begin{multline*}
        z\left(Z'T'+\left(1-\frac{m^2}{z^2}\right)ZT+\frac{z}2\left(Z''T'+Z'T''\right)+\frac{z}2\frac{2m^2}{z^3}ZT+\frac{z}2\left(1-\frac{m^2}{z^2}\right)\left(Z'T+ZT'\right)\right)=\\
        =z\bigg(Z'T'+\left(1-\frac{m^2}{z^2}\right)ZT+\left(\left(\frac{m^2}{2z}-\frac{z}2\right)Z-\frac12Z'\right)T'+\\
        +Z'\left(\left(\frac{m^2}{2z}-\frac{z}2\right)T-\frac12T'\right)+\frac{m^2}{z^2}ZT+\left(\frac{z}2-\frac{m^2}{2z}\right)\left(Z'T+ZT'\right)\bigg)=zZT,
    \end{multline*}}
    which verifies \cref{eq:integral}.

    Now we go back to \cref{eq:integral_sum} and expand the square. Exploiting the realness of Bessel functions for real inputs, we get
    \begin{multline*}
        \int_a^bx\left|cJ_m(x)+dY_m(x)\right|^2\textup{d}x=\\
        =|c|^2\int_a^bxJ_m(x)^2\textup{d}x+2\textup{Re}(cd)\int_a^bxJ_m(x)Y_m(x)\textup{d}x+|d|^2\int_a^bxY_m(x)^2\textup{d}x.
    \end{multline*}
    The claim follows by applying \cref{eq:integral} thrice and collapsing the square back.
\end{proof}

We can now prove \cref{prop:annulus:bifurc_ord}. Using the eigenvector formulas \cref{eq:annulus:functions}, we have
\begin{align*}
    I(p)=\int_D(|v_p|^2-n_p|w_p|^2)=\int_0^{2\pi}\phi(\theta)^2\textup{d}\theta\bigg(&\int_r^1\rho\left|\alpha_pJ_m(\kappa_p\rho)+\beta_pY_m(\kappa_p\rho)\right|^2\textup{d}\rho\\
    -&p\int_r^1\rho\left|\gamma_pJ_m(\sqrt{p}\kappa_p\rho)+\delta_pY_m(\sqrt{p}\kappa_p\rho)\right|^2\textup{d}\rho\bigg),
\end{align*}
with $\int_0^{2\pi}\phi(\theta)^2\textup{d}\theta=a_m\pi$, with $a_0=2$ and $a_m=1$ for $m>0$.

Using \cref{eq:integral_sum} and simple changes of variables we compute
{\small%
\begin{align*}
    I(p)=\frac{a_m\pi}2\bigg(&\bigg(\left|\alpha_pJ_m'(\kappa_p)+\beta_pY_m'(\kappa_p)\right|^2+\left(1-\frac{m^2}{\kappa_p^2}\right)\left|\alpha_pJ_m(\kappa_p)+\beta_pY_m(\kappa_p)\right|^2\bigg)\\[-1pt]
    -r^2&\bigg(\left|\alpha_pJ_m'(\kappa_p r)+\beta_pY_m'(\kappa_p r)\right|^2+\left(1-\frac{m^2}{\kappa_p^2r^2}\right)\left|\alpha_pJ_m(\kappa_p r)+\beta_pY_m(\kappa_p r)\right|^2\bigg)\\[-1pt]
    -p&\bigg(\left|\gamma_pJ_m'(\sqrt{p}\kappa_p)+\delta_pY_m'(\sqrt{p}\kappa_p)\right|^2+\left(1-\frac{m^2}{p\kappa_p^2}\right)\left|\gamma_pJ_m(\sqrt{p}\kappa_p)+\delta_pY_m(\sqrt{p}\kappa_p)\right|^2\bigg)\\[-1pt]
    +pr^2&\bigg(\left|\gamma_pJ_m'(\sqrt{p}\kappa_p r)+\delta_pY_m'(\sqrt{p}\kappa_p r)\right|^2\\
    &\hspace{-1mm}+\left(1-\frac{m^2}{p\kappa_p^2r^2}\right)\left|\gamma_pJ_m(\sqrt{p}\kappa_p r)+\delta_pY_m(\sqrt{p}\kappa_p r)\right|^2\bigg)\bigg).
\end{align*}}

By the boundary conditions at $\rho\in\{r,1\}$,
\begin{align*}
    \alpha_pJ_m(\kappa_p)+\beta_pY_m(\kappa_p)=&~\gamma_pJ_m(p\kappa_p)+\delta_pY_m(p\kappa_p)\\
    \alpha_pJ_m'(\kappa_p)+\beta_pY_m'(\kappa_p)=&~\sqrt{p}\left(\gamma_pJ_m'(\sqrt{p}\kappa_p)+\delta_pY_m'(\sqrt{p}\kappa_p)\right)\\
    \alpha_pJ_m(\kappa_p r)+\beta_pY_m(\kappa_p r)=&~\gamma_pJ_m(p\kappa_p r)+\delta_pY_m(p\kappa_p r)\\
    \alpha_pJ_m'(\kappa_p r)+\beta_pY_m'(\kappa_p r)=&~\sqrt{p}\left(\gamma_pJ_m'(\sqrt{p}\kappa_p r)+\delta_pY_m'(\sqrt{p}\kappa_p r)\right).
\end{align*}
This leads to several cancellations, ultimately yielding
\begin{equation*}
    I(p)=\frac{a_m\pi}2\left(1-p\right)\left(\left|\alpha_pJ_m(\kappa_p)+\beta_pY_m(\kappa_p)\right|^2-r^2\left|\alpha_pJ_m(\kappa_p r)+\beta_pY_m(\kappa_p r)\right|^2\right).
\end{equation*}
The claim follows by \cref{eq:annulus:functions}.

\section*{Acknowledgments}
Our thanks go to the anonymous reviewers, whose constructive and detailed comments helped us in improving this paper.

\bibliographystyle{abbrv}
\bibliography{references}

\end{document}